\newtheorem{theorem}{Theorem}[section]
\newlist{thmcases}{enumerate}{1}
\setlist[thmcases]{
  label=\textbf{\upshape Case~\thetheorem.\arabic*},
  leftmargin=*,
  ref={\thetheorem.\arabic*}}
\newtheorem{lemma}[theorem]{Lemma}
\newtheorem{definition}[theorem]{Definition}
\newtheorem{corollary}[theorem]{Corollary}
\title{On graphs representable by pattern-avoiding words}
\author{Yelena Mandelshtam}
\date{\today}
\begin{document}
\maketitle

\begin{abstract}
In this paper we study graphs defined by pattern-avoiding words. Word-representable graphs have been studied extensively following their introduction in $2000$ and are the subject of a book published by Kitaev in 2015. Recently there has been interest in studying graphs represented by pattern-avoiding words. In particular, in 2016, Gao, Kitaev, and Zhang investigated $132$-representable graphs, that is, word-representable graphs that can be represented by a word which avoids the pattern $132$.  They proved that all $132$- representable graphs are circle graphs and provided examples and properties of $132$-representable graphs. They posed several questions, some of which we answer in this paper.

One of our main results is that not all circle graphs are $132$-representable, thus proving that $132$-representable graphs are a proper subset of circle graphs, a question that was left open in the paper by Gao et al. We show that $123$-representable graphs are also a proper subset of circle graphs, and are different from $132$-representable graphs. We also study graphs represented by pattern-avoiding $2$-uniform words, that is, words in which every letter appears exactly twice.\end{abstract}

\pagebreak

\section{Introduction}

In this paper we study graphs defined by pattern-avoiding words. Word-representable graphs have been investigated extensively following their introduction in \cite{intropaper} and are the subject of the book \cite{book}. Recently there has been interest (see \cite{fill1}, \cite{fill2}, \cite{fill3}) in studying graphs represented by pattern-avoiding words. In particular, Gao, Kitaev, and Zhang studied $132$-representable graphs, that is, word-representable graphs which can be represented by a word which avoids the pattern $132$.  They proved that all $132$-representable graphs are circle graphs and provided examples and properties of $132$-representable graphs. They also posed several questions, some of which we answer in this paper.

One of the main results in this paper is that not all circle graphs are $132$-representable, thus proving that $132$-representable graphs are a strict subset of circle graphs, a question that was left open in \cite{mainpaper}. We show that $123$-representable graphs are also a proper subset of circle graphs, and are different from $132$-representable graphs. We also study graphs represented by pattern-avoiding $2$-uniform words, that is, words in which every letter appears exactly twice.

This paper is organized as follows. In Section \ref{prelims} we introduce important definitions, notation, and past results that will be used in the paper. In Section \ref{123graphs} we prove several results about $123$-representable graphs. In Section \ref{2unifgraphs} we discuss and prove some properties of graphs which can be represented by $2$-uniform pattern-avoiding words, and provide some examples of such graphs. In Section \ref{132graphs} we prove that not all circle graphs are $132$-representable, answering one of the questions posed in \cite{mainpaper}. Finally in Section \ref{opendir} we give some research directions.

Figure \ref{hierarchy} shows the hierarchy of graph classes, as established in this paper, with some examples of graphs fitting into each category.

\begin{figure}[htbp]
\centering
\includegraphics[width=\textwidth]{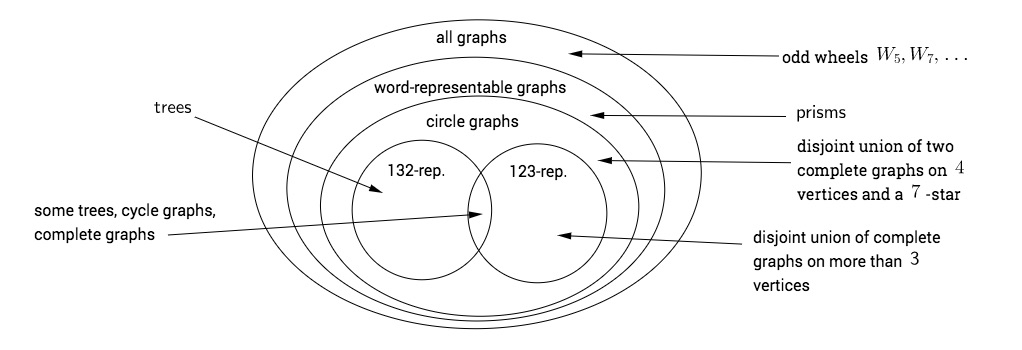}
\caption{\label{hierarchy} The place of $123$ and $132$-representable graphs in the hierarchy of graph classes.
}
\end{figure}

\section{Preliminaries and definitions} \label{prelims}

We will now introduce notation and definitions.

\begin{subsection}{Words and permutations}

Throughout this paper, $w$ refers to a word $w_1w_2 \cdots w_n$ over a totally ordered alphabet. 

\begin{definition}
A word $w$ is $k$-uniform if there are exactly $k$ copies of each letter in $w$.
\end{definition}

For example, the word $12432143$ is $2$-uniform, whereas $1232342$ is not.

\begin{definition} \label{alternateDef}
Two letters $x$ and $y$ \emph{alternate} in a word $w$ if there is an instance of $x$ between any two instances of $y$ and an instance of $y$ between any two instances of $x$.
\end{definition}

For example, in the word $abcbd$, the following pairs of letters are alternating: $(a, c), (a, d), \linebreak (b, c), (c, d).$

\begin{definition} \label{containingDef}
A word $w$ \emph{contains} the pattern $\tau = \tau_1 \cdots \tau_k$ if there are indices $1 \leq i_{a_1} < \dots < i_{a_k} \leq n$ such that $w_{i_{a_1}}, \dots , w_{i_{a_k}}$ is order-isomorphic to $\tau$
\end{definition}
In particular, $w$ contains the pattern $123$ if there is a strictly increasing substring of length $3$ in $w$. For example, $31247$ is a $123$-containing word.

\begin{definition} \label{avoidingDef}
A word $w$ \emph{avoids} a pattern if it does not contain it.
\end{definition}

Thus, the word $7546231$ is $123$-avoiding, whereas $7534621$ is not.

\end{subsection}

\begin{subsection}{Word-representable graphs}

In this paper all graphs are simple. The degree of a vertex is denoted by $d(v)$. 

\begin{definition}
A \emph{circle graph} $G = (V, E)$ is a graph whose vertices can be associated with chords of a circle such that two chords $a$ and $b$ intersect if and only if $ab \in E$.
\end{definition}

\begin{definition}
A graph $G = (V, E)$ is \emph{word-representable} if there exists a word $w$ over $V$ such that $x$ and $y$ alternate in $w$ if and only if $xy \in E$. Any such $w$ is said to represent $G$.
\end{definition}

\begin{figure}[htbp]
\centering
\includegraphics[width=0.4\textwidth]{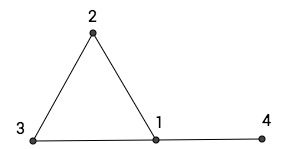}
\caption{\label{wordrepfig}An example of a word-representable graph. A $123$-avoiding word that represents this graph is $32414$.}
\end{figure}

\begin{definition}
Given a pattern $\tau$, a graph is $\tau$-representable if, possibly after relabeling the vertices of the graph, it can be represented by a word which avoids $\tau$.
\end{definition}
In particular, a graph is 123-representable if it can be represented by a 123-avoiding word. See Figure \ref{wordrepfig}.

Note that labeling is important when dealing with $\tau$-representable graphs, for a pattern $\tau$ as opposed to simply word-representable graphs. Figure \ref{labelingEx} shows the importance of a correct labeling. The graphs on the right and left are the same graph, but while the one on the left is $132$-representable with its current labeling, the one on the right is not.

\begin{figure}[htbp]
\centering
\includegraphics[width=0.7\textwidth]{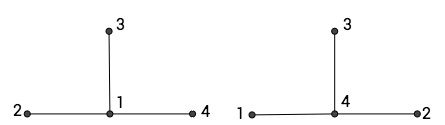}
\caption{\label{labelingEx}An example showing the importance of labeling correctly.}
\end{figure}

\end{subsection}

\begin{subsection}{Preliminaries}

We need the following results.

\begin{theorem}\emph{(\cite{mainpaper})}
Any $132$-representable graph is a circle graph.
\end{theorem}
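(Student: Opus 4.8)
The plan is to reduce the statement to the classical fact (see \cite{book}) that a graph is a circle graph if and only if it can be represented by a $2$-uniform word: placing the $2n$ letters of such a double-occurrence word at $2n$ points around a circle and joining the two copies of each letter by a chord, one checks that two chords cross exactly when the corresponding letters alternate, and conversely every chord diagram, read around the circle, produces a $2$-uniform word representing it. Moreover, if $w$ represents a graph $G$ and every letter occurs in $w$ at most twice, then appending to $w$ a second copy of each letter that occurs only once, in the order of their occurrences in $w$, yields a $2$-uniform word that still represents $G$ (one verifies case by case that the alternation status of no pair of letters changes). Hence it suffices to prove: every $132$-representable graph can be represented by a word in which every letter occurs at most twice.

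To do this, I would first record the structural restrictions that $132$-avoidance imposes on a representant $w$. Writing $m$ for the largest letter of the alphabet and $q_1 < \cdots < q_t$ for the positions of its occurrences, so that $w = A_0\, m\, A_1\, m \cdots m\, A_t$, the pattern $132$ whose ``$3$'' is a copy of $m$ is forbidden, which forces: no letter occurring before a given copy of $m$ is strictly smaller than any letter that occurs after that copy and is smaller than $m$. Deleting all occurrences of $m$ leaves a shorter $132$-avoiding word with largest letter $m-1$, so iterating this observation from the top of the alphabet downward gives a recursive description of all $132$-avoiding words, together with tight control over which pairs of letters can alternate and over which letters may appear between consecutive occurrences of a fixed letter.

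The core of the argument is then a reduction lemma: if $w$ is $132$-avoiding and some letter $x$ occurs at least three times, then some occurrence of $x$ can be deleted (and the word possibly tidied up) so as to obtain a shorter $132$-avoiding word representing the same graph; iterating drives every letter down to at most two occurrences, which with the first paragraph completes the proof. This lemma is the main obstacle, and it is precisely the point at which $132$-avoidance is indispensable: the analogous reduction fails for word-representable graphs in general, since circle graphs form a proper subclass of word-representable graphs, so some word-representable graph admits no representant in which every letter occurs at most twice. I would prove the lemma by a case analysis, guided by the structural description above, on the letters occurring between consecutive copies of $x$, choosing which copy of $x$ to remove so that the ``alternates with $x$'' relation is preserved for every other letter while no edge among the remaining letters is affected; the delicate point is ensuring that removing a ``redundant'' copy of $x$ does not silently destroy an alternation between $x$ and a letter that occurs only once or twice, which is why one needs the precise structural lemma rather than a soft counting argument.
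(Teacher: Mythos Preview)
This theorem is quoted in the paper as a preliminary result from \cite{mainpaper} and is not re-proved here, so there is no proof in the present paper to compare your sketch against directly. That said, your overall strategy --- reduce to a representant with at most two occurrences of each letter and then invoke the double-occurrence-word characterisation of circle graphs (Theorem~\ref{circletwo}) --- is exactly the route taken in \cite{mainpaper}, and it is also the route this paper follows for the analogous $123$ statement (Theorem~\ref{shitcase} and its corollary). Your auxiliary observation about appending second copies of the singleton letters is correct, though unnecessary here since Theorem~\ref{circletwo} already allows ``at most two''.

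Where your sketch falls short is that the reduction lemma --- the only nontrivial step --- is not actually carried out: ``I would prove the lemma by a case analysis'' is a plan, not a proof, and the structural description you propose (peeling off the maximal letter recursively) is heavier machinery than what is needed. In \cite{mainpaper}, and in this paper's $123$ analogue, the case analysis is organised instead by the \emph{degree} of the offending vertex $x$: if $d(x)\ge 2$ then the general bound of Theorem~\ref{easyprop} already forces at most two copies of $x$, so the only real work is for $d(x)\le 1$, where one explicitly deletes or relocates copies of $x$ (relabelling if $x$ is isolated). Your recursive decomposition would eventually get there, but it does not isolate the easy cases, and as written it gives no indication of which copy of $x$ to delete or why alternation with the unique neighbour survives.
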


\begin{theorem}\emph{(\cite{circlepaper})} \label{circletwo}
A graph $G$ is word-representable and has a representant with at most two copies of each letter if and only if $G$ is a circle graph.
\end{theorem}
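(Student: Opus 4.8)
The plan is to prove the two implications separately, in each case translating directly between the combinatorics of a word having at most two copies of each letter and the geometry of a chord diagram.

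For the ``only if'' direction (a circle graph has such a representant), suppose $G$ has a chord diagram $D$. I would label the chords of $D$ by distinct letters and read off the $2|V(G)|$ endpoints in cyclic order, starting from an arbitrary point of the circle, to obtain a word $w$; since each chord has two endpoints, $w$ is $2$-uniform. The point to check is that for two letters $x,y$ each occurring exactly twice, $x$ and $y$ alternate in $w$ (Definition \ref{alternateDef}) precisely when the four occurrences interleave as $xyxy$ or $yxyx$, which is exactly the condition that the corresponding chords cross in $D$; one should also note that the interleaving condition for four points read off a circle does not depend on where the circle was cut, so it agrees with ``crossing in $D$''. Hence $x$ and $y$ alternate in $w$ iff $xy\in E(G)$, so $w$ represents $G$ and uses exactly two copies of each letter.

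For the ``if'' direction, suppose $w=w_1\cdots w_N$ represents $G$ and each letter occurs at most twice. Here I would build a chord diagram by hand, after recording two elementary consequences of the definition of alternation: any two letters each occurring exactly once always alternate (so the single-occurrence letters induce a clique), and a letter $x$ occurring once at position $i$ alternates with a letter $y$ occurring at positions $j<k$ iff $j<i<k$. Now place $2|V(G)|$ points on a circle: first a contiguous ``tail'' arc $T$ containing one point for each single-occurrence letter, then $N$ further points, one for each position $1,\dots,N$ of $w$. A letter occurring twice, at positions $j<k$, is assigned the chord joining points $j$ and $k$; a letter occurring once, at position $i$, is assigned the chord joining point $i$ to its point of $T$, where the points of $T$ are listed in the \emph{same} order as the positions of the corresponding single letters in $w$. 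Then one checks the three cases: two double letters cross iff their position pairs interleave iff they alternate in $w$; a double letter $y$ (positions $j<k$) and a single letter $x$ (position $i$, with its tail point before everything) cross iff exactly one of $j,k$ lies in the short arc from $T$ to $i$, i.e. iff $j<i<k$; and two single letters cross because the consistent ordering of $T$ forces their two chords to interleave. In each case crossing in the diagram matches adjacency in $G$, so $G$ is a circle graph.

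The routine part is the three-case verification and the remark that circular interleaving is independent of the cut point; the step needing genuine care is the handling of single-occurrence letters in the ``if'' direction, in particular choosing the tail arc $T$ and the order of its points so that single--single chords always cross while single--double crossings are governed exactly by the straddling condition above. One might hope instead to reduce to the $2$-uniform case by appending a second copy of each single-occurrence letter, but a careless duplication destroys edges, so the direct chord construction is the cleaner route.
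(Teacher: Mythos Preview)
The paper does not prove this theorem: it is quoted as a preliminary result from \cite{circlepaper} and no proof is given in the paper itself, so there is nothing in the paper to compare your argument against.

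Your argument is the standard one and is correct in substance. The chord-reading construction for one direction and the explicit chord diagram (with a tail arc for the single-occurrence letters) for the other direction both work; in particular, your choice to order the tail points in the \emph{same} order as the corresponding positions in $w$ is exactly what forces every pair of single--single chords to interleave, and your straddling criterion $j<i<k$ for a single letter at position $i$ versus a double letter at positions $j<k$ is precisely Definition~\ref{alternateDef} in this situation.

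One presentational slip: you have reversed the labels ``if'' and ``only if''. The statement is ``$G$ has a representant with at most two copies of each letter if and only if $G$ is a circle graph'', so the implication ``circle graph $\Rightarrow$ such a representant exists'' is the \emph{if} direction, and ``such a representant exists $\Rightarrow$ circle graph'' is the \emph{only if} direction. Your paragraph headed ``only if'' actually proves the ``if'' part, and vice versa. The mathematics is unaffected, but the labels should be swapped.
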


\begin{lemma}\emph{(\cite{mainpaper})}\label{2uniflemma}
If $G_1, G_2, \dots , G_k$ are the connected components of a graph $G$ that can be $132$-represented by $2$-uniform words $w_1, w_2, \dots , w_k$, respectively, then $G$ is $132$-representable by a $2$-uniform word.
\end{lemma}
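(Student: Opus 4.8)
The plan is to relabel the vertices componentwise and then concatenate, the point being to choose the relabeling so that the alphabets of $w_1,\dots,w_k$ occupy \emph{decreasing} blocks of the combined totally ordered alphabet. The first ingredient is the elementary observation that if $u$ is a $132$-avoiding word and we apply an order-preserving bijection to its alphabet, the result is still $132$-avoiding (the relative order of any three chosen positions is unchanged), still $2$-uniform, and represents the same graph. Applying this inside each component, I would arrange that $V(G_1),\dots,V(G_k)$ are pairwise disjoint and that every letter of $V(G_i)$ is larger than every letter of $V(G_{i+1})$ for $1\le i\le k-1$, while each $w_i$ remains a $2$-uniform $132$-avoiding word representing $G_i$ (concretely: push the labels of $G_k$ to the bottom of the alphabet, those of $G_{k-1}$ just above them, and so on, preserving the relative order within each block).

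Next set $w=w_1w_2\cdots w_k$, which is plainly $2$-uniform over $V(G)=\bigcup_i V(G_i)$. To check that $w$ represents $G$, consider two letters. If $x,y\in V(G_i)$, then the occurrences of $x$ and $y$ appear in $w$ in exactly the same relative arrangement as in $w_i$ (only letters of $V(G_i)$ lie between them), so $x$ and $y$ alternate in $w$ if and only if they alternate in $w_i$, i.e. if and only if $xy\in E(G_i)$. If $x\in V(G_i)$ and $y\in V(G_j)$ with $i<j$, then both copies of $x$ precede both copies of $y$, so no copy of $y$ lies between the two copies of $x$ and $x$, $y$ do not alternate; this matches the fact that $G$ has no edges between distinct components. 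Hence $w$ represents $G$.

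It remains to verify that $w$ avoids $132$. Suppose positions $p_1<p_2<p_3$ of $w$ realize the pattern $132$, so $w_{p_1}<w_{p_3}<w_{p_2}$, and let $c(p)$ be the index of the component whose alphabet contains the letter at position $p$. Since the blocks $w_1,\dots,w_k$ appear in $w$ in this order, $c(p_1)\le c(p_2)\le c(p_3)$. On the other hand, $w_{p_1}<w_{p_3}$ together with the decreasing arrangement of the blocks forces $c(p_1)\ge c(p_3)$. Therefore $c(p_1)=c(p_2)=c(p_3)$, so all three positions lie inside a single $w_i$, contradicting that $w_i$ avoids $132$. Thus $w$ is a $2$-uniform $132$-avoiding word representing $G$, as required.

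The one step I would flag as the crux is the choice of relabeling: if the component alphabets were arranged in \emph{increasing} order instead, then a single descent inside any later component together with any letter from an earlier component would spell out a $132$ in the concatenation, so the decreasing-block ordering is exactly what makes the avoidance argument work. Once that ordering is fixed, the rest is a routine verification.
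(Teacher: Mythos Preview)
Your proof is correct. The paper itself does not prove this lemma (it is quoted from \cite{mainpaper}), but in the proof of Theorem~\ref{onlyonenot2unif} the paper carries out exactly your concatenation argument for the more general statement: relabel so that the component alphabets form consecutive blocks and concatenate so that each block's letters are all smaller than those of the preceding block. The only cosmetic difference is that the paper labels $G_1,\dots,G_k$ with \emph{increasing} blocks and then concatenates in reverse as $w_kw_{k-1}\cdots w_1$, whereas you label with decreasing blocks and concatenate as $w_1w_2\cdots w_k$; these produce the same word up to renaming of indices, and your verification of $132$-avoidance via the component-index function $c$ is a clean way to phrase the observation the paper states informally.
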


\end{subsection}

\section{123-representable graphs}\label{123graphs}

In this section we discuss properties of words representing $123$-representable graphs and properties of the graphs themselves.

We first present a simple, but useful generalization of Theorem 3.1 in \cite{mainpaper}. 
\begin{theorem} \label{easyprop}
Let $G$ be a word-representable graph, which can be represented by a word avoiding a pattern $\tau$ of length $k+1$. Let $x$ be a vertex in $G$ such that $d(x) \geq k$. Then, any word $w$ representing $G$ that avoids $\tau$ must contain no more than $k$ instances of $x$. 
\end{theorem}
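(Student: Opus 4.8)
The plan is to argue by contradiction: suppose $w$ is a $\tau$-avoiding word representing $G$, where $\tau$ has length $k+1$, and suppose $x$ occurs at least $k+1$ times in $w$. I will use these $k+1$ copies of $x$ together with the $k$ neighbours of $x$ to manufacture an occurrence of $\tau$ in $w$, contradicting that $w$ avoids $\tau$.

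First I would set up notation. Let $\tau = \tau_1 \cdots \tau_{k+1}$, and fix $k$ distinct neighbours $y_1, \dots, y_k$ of $x$ (these exist since $d(x) \geq k$). Since each $y_i$ alternates with $x$ in $w$ and $x$ appears at least $k+1$ times, each $y_i$ appears at least $k$ times, and moreover the occurrences of $x$ and $y_i$ interleave perfectly: between consecutive copies of $x$ there is exactly the right room to place one copy of each $y_i$. The key combinatorial observation is that in the factor of $w$ lying strictly between the $j$-th and $(j+1)$-st copy of $x$, every neighbour $y_i$ must have an occurrence (by the alternation condition applied to the pair $x, y_i$). Using this, I would extract one well-chosen occurrence of $x$ and one well-chosen occurrence of each $y_i$, ordered along $w$, so that the resulting subsequence is order-isomorphic to $\tau$: whichever position $\tau_\ell$ corresponds to the single "$x$-value" is filled by a copy of $x$, and the remaining $k$ positions are filled by copies of the $y_i$'s, choosing which $y_i$ goes where according to the relative order of the entries of $\tau$ (the alphabet being totally ordered, we may relabel so that the $y_i$ have whatever relative magnitudes we need, or simply pick the $y_i$ realizing the required magnitude — here I would lean on the "possibly after relabeling" clause and the fact that we only need existence of the pattern, so the specific labels of the $y_i$ relative to $x$ can be chosen to match $\tau$). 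Since $\tau$ has exactly one "slot" and $k$ others, and we have $k+1$ copies of $x$ giving $k$ gaps each containing all neighbours, there is enough room to realize $\tau$ in order.

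Concretely, for the case that matters in this paper, $\tau = 123$ or $\tau = 132$ and $k = 2$: if $x$ appears three times as $\dots x \dots x \dots x \dots$, and $y_1, y_2$ are two neighbours, then $y_1$ appears between the first and second $x$ and $y_2$ between the second and third $x$ (or some occurrence pattern forced by alternation), and by choosing the labels/roles of $y_1, y_2$ appropriately one reads off an occurrence of $\tau$. The general statement is then the natural induction/counting generalization of this.

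The main obstacle I anticipate is bookkeeping the alternation condition precisely enough to guarantee that the $k$ neighbours can be placed into the $k$ non-$x$ slots of $\tau$ in the correct relative order simultaneously — a single neighbour gives an occurrence in each gap, but I need $k$ different neighbours occupying $k$ specified positions with prescribed relative magnitudes. Handling this cleanly may require either (a) invoking the relabeling freedom to assume $y_1 < y_2 < \cdots < y_k$ in whatever order the pattern $\tau$ demands, or (b) a short pigeonhole argument on which copies of the $y_i$ fall in which gaps between copies of $x$. I would isolate this as the one lemma-like step and keep the rest of the argument as a direct reading-off of the pattern.
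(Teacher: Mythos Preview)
Your approach is essentially the same as the paper's: assume $x$ occurs $k+1$ times, use alternation to force each neighbour $y_i$ to appear in every one of the $k$ gaps between consecutive $x$'s, and then read off an occurrence of $\tau$. The paper phrases the final step more cleanly than you do: since each gap contains all of $y_1,\dots,y_k$, and there are $k+1$ copies of $x$ to interleave, \emph{every} permutation of the $k+1$ distinct letters $x,y_1,\dots,y_k$ occurs as a subsequence of $w$; in particular the one order-isomorphic to $\tau$ does. No relabeling is needed or permitted here --- your appeal to the ``possibly after relabeling'' clause is misplaced, since $w$ is a fixed word and its letters already have fixed values. The point is simply that $k+1$ distinct values, arranged in all $(k+1)!$ orders, realize every pattern of length $k+1$. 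Drop the relabeling discussion and the anticipated obstacle disappears.
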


\begin{proof} 
Since $d(x) \geq k$, there must be vertices $a_1, a_2, \dots ,a_k$ that are adjacent to $x$, and each of $a_1, \dots, a_k$ must be alternating with $x$ in $w$.

Now suppose there are at least $k+1$ copies of $x$ in $w$. Then there exists a subsequence $x_1w_1x_2 \cdots w_kx_{k+1}$ in $w$, where each $w_1 \cdots w_k$ must have a copy of each $a_1 \cdots a_k$. However, now every single possible permutation of $x, a_1, a_2, \dots , a_k$ can be found in the word, since we can get any permutation of the $a_i$'s by simply taking the first element from $w_1$, the second from $w_2$, etc. Then we can find an $x$ between any two elements of the permutation of the $a_i$s, so we have all possible permutations of $x, a_1, a_2, \dots , a_k$. Therefore, all possible patterns of length $k+1$ can be found in $w$. It follows that $x$ can appear in $w$ no more than $k$ times for $W$ to avoid a pattern of length $k+1$.
\end{proof}

Note that in the case of $123$-representability, Theorem \ref{easyprop} implies that in a graph $G$, if $x$ has degree at least $2$, then $x$ can appear at most two times in any $123$-avoiding word representant for $G$.

\begin{corollary} \label{easycor}
Let $w$ be a word-representant for a graph which avoids a pattern of length $k+1$. If some vertex $a$ adjacent to $x$ has degree at least $k$, then $x$ occurs at most $k+1$ times in $w$.
\end{corollary}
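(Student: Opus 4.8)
The plan is to derive this directly from Theorem \ref{easyprop}, applied not to $x$ but to its neighbour $a$, combined with the elementary fact that two letters which alternate in a word can differ in multiplicity by at most one. First I would invoke Theorem \ref{easyprop} with the pattern $\tau$ of length $k+1$ and the vertex $a$: since $d(a) \geq k$, any $\tau$-avoiding representant of the graph — in particular $w$ — contains at most $k$ copies of $a$.

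Next I would use that $x$ is adjacent to $a$, so by the definition of word-representability $x$ and $a$ alternate in $w$ (Definition \ref{alternateDef}). The key sub-claim is then purely about words: if a letter $y$ occurs $m$ times in $w$ and a letter $z$ occurs $t$ times in $w$ and $y,z$ alternate, then $|m-t| \leq 1$. To see this, restrict $w$ to the subword on $\{y,z\}$; the $m$ copies of $y$ create $m-1$ internal gaps, and alternation forces each such gap to contain at least one copy of $z$, giving $t \geq m-1$; the symmetric argument gives $m \geq t-1$. Applying this with $y = x$, $z = a$, and $t \leq k$ yields that $x$ occurs at most $k+1$ times in $w$, which is the claim.

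I do not expect a genuine obstacle here, since the statement is essentially a one-line consequence of Theorem \ref{easyprop}. The only point requiring a little care is the direction of the alternation inequality: we specifically need that the copies of $a$ separate the copies of $x$ (so that many copies of $x$ force many copies of $a$), which is exactly the ``instance of $a$ between any two instances of $x$'' half of Definition \ref{alternateDef}. Everything else is bookkeeping.
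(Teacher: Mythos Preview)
Your proposal is correct and follows exactly the same approach as the paper: apply Theorem~\ref{easyprop} to the neighbour $a$ to bound its multiplicity by $k$, then use the alternation of $x$ and $a$ to conclude that $x$ occurs at most $k+1$ times. Your version is more explicit about the ``multiplicities differ by at most one'' sub-claim, whereas the paper simply asserts it in a single sentence, but the argument is the same.
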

 
\begin{proof}
By Theorem \ref{easyprop}, $a$ occurs at most $k$ times in $w$. Since $x$ must alternate with $a$, there can be no more than $k + 1$ instances of $x$, for otherwise, $x$ and $a$ would not be alternating.
\end{proof}

Note that if $x$ is a vertex of degree $1$ in a graph $G$  and a vertex $a$ adjacent to $x$ has degree at least $2$, then $x$ occurs at most three times in any word that is a $123$-representant for $G$.

The following lemma will be very useful in the proof of Theorem \ref{shitcase}.

\begin{lemma} \label{switch}
If $w$ is a $123$-representant for $G = (V, E)$ and has a factor $ab$, with $a < b$, then $w'$, formed by switching $a$ and $b$, is also $123$-avoiding. Furthermore, $w'$ represents $G$ if $ab \not\in E$, and $a$ and $b$ do not alternate in $w'$.
\end{lemma}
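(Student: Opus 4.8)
The plan is to keep careful track of which pairs of letters can change their alternation status under the swap, and to show that the swap can never create an increasing length-$3$ subsequence. Throughout, let the distinguished factor occupy positions $p$ and $p+1$, so that $w_p=a$ and $w_{p+1}=b$ with $a<b$, and let $w'$ be the word with $w'_p=b$, $w'_{p+1}=a$, and $w'_m=w_m$ for all $m\notin\{p,p+1\}$.

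First I would prove that $w'$ is $123$-avoiding, by contradiction. Suppose $w'$ has an occurrence of $123$ at positions $i<j<k$. Since positions $p$ and $p+1$ are adjacent and $w'_p=b>a=w'_{p+1}$, this occurrence cannot use both $p$ and $p+1$ (they would be consecutive entries of a strictly increasing triple). If it uses neither, then the same three positions already form an increasing triple in $w$, contradicting that $w$ avoids $123$. If it uses exactly one of the two positions, I would move that index to the other position of the block $\{p,p+1\}$: because $w'_p=w_{p+1}$ and $w'_{p+1}=w_p$, this replacement leaves the triple's values unchanged, and since the remaining two indices of the occurrence differ from both $p$ and $p+1$ — hence lie strictly below $p$ or strictly above $p+1$ — they stay correctly ordered relative to the moved index. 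This again produces an increasing triple in $w$, a contradiction; so $w'$ avoids $123$.

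For the second assertion, the crucial observation is that the only pair of letters whose alternation status can differ between $w$ and $w'$ is $\{a,b\}$. Indeed, for any other pair $\{x,y\}$, the subsequence of $w$ consisting of its occurrences of $x$ and $y$ is identical to the corresponding subsequence of $w'$: the only letters that move are the copy of $a$ at position $p$ (to position $p+1$) and the copy of $b$ at position $p+1$ (to position $p$), and since $p$ and $p+1$ are consecutive positions, no occurrence of any third letter sits between them, so such a subsequence is left intact. Hence for every $\{x,y\}\neq\{a,b\}$, the letters $x$ and $y$ alternate in $w'$ exactly when they alternate in $w$, i.e.\ exactly when $xy\in E$. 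Finally, under the two stated side conditions — $ab\notin E$ and $a,b$ not alternating in $w'$ — the remaining biconditional ``$a$ and $b$ alternate in $w'$ $\iff$ $ab\in E$'' holds with both sides false. Since $w'$ is a rearrangement of $w$ and thus a word over the same vertex set, it follows that $w'$ represents $G$.

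The only delicate point I foresee is the case check inside the $123$-avoidance argument: one must confirm, in each of the (at most) six sub-cases according to which of $p,p+1$ is used and which of the three roles (``$1$'', ``$2$'', or ``$3$'') it plays, that sliding the chosen index by one step keeps the three positions strictly increasing. This is routine once one records that any index of the occurrence other than $p,p+1$ must lie outside the block $\{p,p+1\}$. Everything else follows directly from the definitions of alternation and of word-representability.
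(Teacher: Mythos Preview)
Your argument is correct and follows the same approach as the paper's proof: show that any $123$ occurrence in $w'$ would force one in $w$, and then observe that the only pair whose alternation status can change is $\{a,b\}$. Your treatment of the $123$-avoidance step is in fact more careful than the paper's, which simply asserts that both swapped positions must participate in any new $123$; your sliding argument is exactly what is needed to justify the case where only one of the two positions appears in the pattern.
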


\begin{proof}
Assume for the sake of contradiction that $w'$ is not $123$-avoiding, and thus contains an increasing subsequence of length $3$. Since the only difference between $w$ and $w'$ is the order of $a$ and $b$, this means that they both must be in this subsequence. However, they are in decreasing order, so this is impossible. Thus $w'$ is $123$-avoiding.

The only possible edges (or lack thereof) that could have been affected by the switch are edges incident with $a$ or $b$. However, the relative order of $a$ and $b$ is irrelevant when considering an edge only involving one of $a$ or $b$. Thus if $ab \not \in E$, and $a$ and $b$ do not alternate in $w'$, then $w'$ represents $G$.
\end{proof}

Note that Lemma \ref{switch} says that if we have a word $w = w_1a_1a_2 \cdots a_kxw_2$ that is $123$-avoiding, with all $a_i < x$ for $1\leq i \leq k$, then the word $w_1xa_1a_2 \cdots a_kw_2$ is also $123$-avoiding and represents almost the same graph as $w$, except with possible changes in the connections between $x$ and all of the $a_i$'s.

The following theorem will be the main ingredient in proving that all $123$-representable graphs are circle graphs.

\begin{theorem} \label{shitcase}
If a graph $G$ is $123$-representable, then there exists a $123$-avoiding word $w$ representing $G$ such that any letter in $w$ appears at most twice.
\end{theorem}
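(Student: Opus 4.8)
The plan is the following. We argue by induction on the number of vertices of $G$. If $G$ has a connected component $C$ isomorphic to $K_1$ or to $K_2$, we delete it: by the inductive hypothesis the graph $G-C$ obtained by removing $C$ has an at most $2$-uniform $123$-avoiding representant $v$, and, after relabeling so that the one or two vertices of $C$ become the smallest letters of the alphabet, we append to $v$ the block $zz$ when $C=K_1$ on $z$, or $xaxa$ when $C=K_2$ on $\{x,a\}$. A direct check shows that the gap between the two copies of any appended letter contains only letters of $C$, so these letters alternate with nothing in $v$; that the block realizes the edges of $C$ correctly; and that, the appended letters being smaller than all of $v$, no increasing subsequence of length $3$ is created. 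Hence we may assume $G$ has no isolated vertex and no component isomorphic to $K_2$; in particular every leaf of $G$ has a neighbor of degree at least $2$.

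Among all $123$-avoiding words representing such a $G$, fix one, $w$, with as few letters occurring three or more times as possible; we must show there are none. Suppose instead that $x$ occurs at least three times. By Theorem~\ref{easyprop} with $k=2$, every vertex of degree at least $2$ occurs at most twice, so $d(x)\le 1$; since $G$ has no $K_1$ or $K_2$ component, $x$ is a leaf whose neighbor $a$ satisfies $d(a)\ge 2$. Then $a$ occurs exactly twice --- at most twice by Theorem~\ref{easyprop}, at least twice because it alternates with the copies of $x$, which forces $x$ to occur exactly three times --- and the two copies of $a$ sit in different gaps between consecutive copies of $x$: writing $X_1<X_2<X_3$ for the positions of $x$, one copy of $a$ lies in $(X_1,X_2)$ and the other in $(X_2,X_3)$. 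We will delete $X_1$ or $X_3$ and reach a $123$-avoiding word that still represents $G$ but has one fewer letter occurring three or more times, contradicting the choice of $w$.

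Deleting $X_2$ is wrong: it would put both copies of $a$ into a single gap with no copy of $x$ between them, destroying the edge $xa$. Deleting an outer copy keeps $x$ and $a$ alternating, cannot create a pattern $123$ (deletion never creates a pattern), and cannot change any edge not incident with $x$; the sole danger is that afterward $x$ starts alternating with some $y\ne a$. Unwinding the definition of alternation, this requires $y$ to have all its occurrences in the two gaps $(X_1,X_2)$ and $(X_2,X_3)$, present in exactly one of them and interleaving correctly with the two surviving copies of $x$ --- so an obstruction is a letter occurring once in one of these gaps, or once in each. Lemma~\ref{switch} is used to clear such obstructions beforehand: a switch slides a copy of $x$ left past a smaller letter, or right past a larger letter, and by the lemma alters only the adjacency between $x$ and the jumped letter; since $x$ is a leaf whose only correct neighbor is $a$, each such alteration is one we re-examine after the deletion anyway, while all adjacencies not involving $x$ are left intact throughout. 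Moreover, since $w$ avoids $123$, the letters larger than $x$ that occur after $X_1$ form a weakly decreasing subsequence (otherwise $x$ together with two of them would form a $123$), which tightly limits what can occupy the two gaps. Combining these facts, one shows that in every configuration the copies of $x$ can be repositioned by switches so that, after deleting an outer copy, the remaining two copies of $x$ alternate with $a$ and with nothing else.

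The main obstacle is this last step --- the case analysis showing that switches always suffice to clear the two gaps flanking $X_2$ without spawning a new alternation somewhere else. That is the combinatorial core of the argument, and it is where Lemma~\ref{switch}, together with careful accounting of which single adjacency each switch can affect and of the monotonicity imposed by $123$-avoidance, is indispensable.
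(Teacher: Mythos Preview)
Your reduction is the paper's: Theorem~\ref{easyprop} forces any over-represented $x$ to have $d(x)\le 1$, and your append-a-block trick for $K_1$ and $K_2$ components mirrors the paper's prepend-a-block trick (its Cases~1 and~2, which relabel to the \emph{largest} letters and prepend $x'x'$ or $x'a'x'a'$). The only substantive case is therefore exactly the paper's Case~3: $x$ a leaf, its neighbor $a$ of degree $\ge 2$, $a$ occurring twice and $x$ exactly three times, so that
\[
w \;=\; w_1\,x\,w_2\,a\,w_3\,x\,w_4\,a\,w_5\,x\,w_6.
\]

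Here, however, your proposal stops. You state the plan --- slide an outer copy of $x$ inward using Lemma~\ref{switch}, then delete the other outer copy --- and you record one monotonicity constraint, but in your last paragraph you explicitly call the needed case analysis ``the main obstacle'' and do not carry it out. That analysis \emph{is} the theorem; it is not routine, and the single observation that letters $>x$ to the right of $X_1$ are decreasing is far from enough. What the paper actually proves is: for any $c\in w_3$, five of the six permutations of $\{a,x,c\}$ already occur in $w$, so $123$-avoidance forces $a<x<c$; symmetrically any $c\in w_4$ forces $c<x<a$. Hence $w_3$ and $w_4$ cannot both be nonempty. Bringing in a second neighbor $b$ of $a$ (which must lie in $w_3\cup w_4$ since it alternates with $a$), a six-way check on the order of $a,b,x$ pins down which of the two subcases one is in. In the subcase $a<x$ (so $w_4=\emptyset$), a further pattern argument using $a_1cx_3$ and $a_1x_2c$ shows every letter of $w_5$ lies below $a$, hence below $x$; only now is Lemma~\ref{switch} applicable to push $x_3$ left across $w_5$, after which deleting $x_1$ leaves $a$ as the \emph{only} letter between the two surviving copies of $x$. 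The subcase $x<a$ is symmetric via $w_2$. None of these structural facts --- that one of $w_3,w_4$ must be empty, and that the block to be jumped lies entirely on the correct side of $x$ --- is established in your write-up, and without them the switches you propose are not guaranteed to be available or to leave the gap between the surviving $x$'s containing only $a$.
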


\begin{proof}
Let $w$ be a $123$-representant for $G = (V, E)$. We will show that if there is a letter $x$ which appears more than two times, we can create a new word that is still a $123$-representant of $G$ but has $x$ appearing only twice and does not change the frequency of the other letters. Then this process can be repeated until every letter appears at most twice.

We begin by noting that if a vertex in $V$ has degree at least two, then by Theorem \ref{easyprop}, the corresponding letter appears at most twice. Hence we need only consider vertices which have degree $1$ or $0$.

\begin{thmcases}
\item{$d(x) = 0$: \newline Let $w_1$  be the word obtained by deleting all instances of $x$ from $w$, and then re-label the graph so that the vertex previously labeled $x$ now has the largest value of any of the vertices, while preserving the relative order of the other vertices. We denote this value $x'$. The desired word is $x'x'w_1$. This word is still $123$-avoiding since $x'$ cannot participate in a $123$-pattern as it is at the beginning of the word. Furthermore, the word still represents $G$ because $x'$ cannot alternate with anything, and all other alternating pairs have not been affected.
}

\item{$d(x) = 1$ and the vertex $a$ connected to $x$ also has degree $1$: \newline This case means that the edge $xa$ is disconnected from the rest of the graph. Let $w_1$ be the word obtained from $w$ after deleting all instances of $a$ and $x$. Now we form the final word $x'a'x'a'w_1$, where $x' > a'$ and $a'$ is greater than all the values of the letters in $w_1$. The new word is still $123$-avoiding and it represents $G$ because $x'$ alternates with $a'$ and neither of them alternates with any other letter.
}

\item{$d(x) = 1$ and the vertex $a$ adjacent to $x$ has degree at least $2$: \newline By Corollary \ref{easycor}, $x$ cannot appear more than $3$ times in $w$. Then we have the word $$w = w_1x_1w_2a_1w_3x_2w_4a_2w_5x_3w_6.$$

Now consider the word $w_3$. If $w_3$ is nonempty, let $c$ be a letter in $w_3$. Then $5$ of the $6$ possible permutations of $axc$ appear in $w$, namely, $x_1a_1c$, $x_1ca_2$, $cx_2a_2$, $ca_2x_3$, and $a_1cx_2$. The only one that does not appear is  $axc$. Thus, for $w$ to be $123$-avoiding, $a < x < c$ must hold for any $c \in w_3$.

Similarly, if $w_4$ is nonempty, and it contains some $c$, then the only possible ordering of $a$, $x$, and $c$ is $c < x < a$, as $w$ contains $x_1a_1c$, $x_1ca_2$, $ca_2x_3$, $a_1x_2c$, and $a_1cx_3$. 

Because both inequalities cannot hold simultaneously, $w_3$ and $w_4$ cannot both be nonempty.

Let $b$ be another vertex adjacent to $a$. Then we know $b$ must be either in $w_3$ or $w_4$ since it alternates with $a$. There are six possible relative orders of $a$, $b$, and $x$:

\begin{enumerate}
\item{$x < a < b$: This is impossible since $x_1a_1b$ is a $123$-pattern.
}
\item{$x<b<a$: This is impossible since $x_1ba_2$ is a $123$-pattern.
}
\item{$b<a<x$: This is impossible since $ba_2x_3$ is a $123$-pattern.
}
\item{$a<b<x$: This is impossible since $a_1bx_3$ is a $123$-pattern.
}
\item{$a<x<b$: This is only possible if $b \in w_3$. Otherwise, if $b \in w_4$, then $a_1x_2b$ is a $123$-pattern.
}
\item{$b<x<a$: This is only possible if $b \in w_4$. Otherwise, if $b \in w_3$, then $bx_2a_2$ is a $123$-pattern.
}
\end{enumerate}

This gives us two subcases.

\begin{description}
\item[Subcase 1]{$a<x<b$ and $b \in w_3$ \newline This implies that $w_4$ is empty since $w_3$ is nonempty. This gives us $$w = w_1x_1w_2a_1w_3bw_4x_2a_2w_5x_3w_6.$$

If $w_5$ is nonempty, consider some $c \in w_5$. Note that $w$ contains the patterns $a_1cx_3$ and $a_1x_2c$. Then, since $w$ is $123$-avoiding, $c<a$. Thus every letter in $w_5$ is less than $a$ (vacuously true if $w_5$ is empty). Now let $$w' = w_1w_2a_1w_3bx_2a_2x_3w_5w_6.$$ By Lemma \ref{switch}, $w'$ is $123$-avoiding, and it only remains to make sure that $x$ shares an edge with $a$ and does not share an edge with any vertex in $w_5$ (since $d(x) = 1$). Since $a_2$ is the only letter between $x_2$ and $x_3$, $x$ can only alternate with $a$. In fact, it clearly does alternate with $a$, so we have successfully created a word $w'$ which only contains $x$ twice.
}

\item[Subcase 2]{$b < x < a$ and $b \in w_4$ \newline This implies that $w_3$ is empty since $w_4$ is nonempty. This gives us $$w = w_1x_1w_2a_1x_2bw_4a_2w_5x_3w_6.$$

If $w_2$ is nonempty, consider some $c \in w_2$. Note that $w$ contains the patterns $x_1ca_1$ and $cx_2a_2$. Thus, since $w$ is $123$-avoiding, $x < c$. Therefore every letter in $w_2$ is greater than $x$ (vacuously true if $w_2$ is empty). Now let $$w' = w_1w_2x_1a_1x_2w_3bw_4a_2w_5w_6.$$ By Lemma \ref{switch}, $w'$ is $123$-avoiding and it only remains to make sure that $x$ is only alternating with $a$. It is clear that $x$ alternates with $a$, and since the only letter between $x_1$ and $x_2$ is $a_1$, $x$ does not alternate with any other letters. Thus we have successfully created a word $w'$ which is a $123$-representant of $G$ and only contains $x$ twice.
}

\end{description}
In both cases we were able to create a new word which represents the same graph but only contains two copies of the letter $x$. 

}
\end{thmcases}
We have gone through all possible cases, and have shown that it is possible to create a $123$-representant for a graph $G$ which has no more than two copies of each letter, since we can repeat the process outlined above for every letter which appears more than two times. Therefore there exists a word for any $123$-representable graph which contains no more than two copies of each letter.
\end{proof}

The next corollary follows easily.
\begin{corollary}
Any $123$-representable graph is a circle graph.
\end{corollary}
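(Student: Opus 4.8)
The plan is to combine Theorem~\ref{shitcase} with Theorem~\ref{circletwo}. Theorem~\ref{shitcase} tells us that any $123$-representable graph $G$ can in fact be represented by a $123$-avoiding word in which every letter appears at most twice; in particular $G$ is word-representable by a word with at most two copies of each letter. Theorem~\ref{circletwo} (the result of \cite{circlepaper} quoted in the preliminaries) states precisely that a graph is word-representable with a representant having at most two copies of each letter if and only if it is a circle graph. Chaining these two facts immediately yields that $G$ is a circle graph.

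Concretely, I would write: let $G$ be $123$-representable. By definition there is a $123$-avoiding word representing $G$ (after a suitable relabeling of the vertices, which does not affect whether $G$ is a circle graph). By Theorem~\ref{shitcase}, there is then a $123$-avoiding word $w$ representing $G$ in which each letter occurs at most twice. Forgetting the pattern-avoidance condition, $w$ is a word-representant of $G$ with at most two copies of each letter, so $G$ satisfies the hypothesis of Theorem~\ref{circletwo}. Hence $G$ is a circle graph.

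There is essentially no obstacle here: the corollary is a one-line deduction once Theorem~\ref{shitcase} is in hand, since all the real work — the delicate case analysis reducing the number of copies of each letter to two — has already been carried out in the proof of that theorem, and Theorem~\ref{circletwo} is imported as a black box. The only minor point worth a remark is that $123$-representability is defined up to relabeling, but relabeling the vertices of a graph produces an isomorphic graph, and being a circle graph is an isomorphism-invariant property, so this causes no difficulty.
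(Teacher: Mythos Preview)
Your proposal is correct and follows essentially the same approach as the paper: the paper's own proof simply invokes Theorem~\ref{shitcase} to get a representant with at most two copies of each letter and then applies Theorem~\ref{circletwo} to conclude that the graph is a circle graph. Your additional remark about relabeling being harmless is a fine clarification but is not needed for the argument.
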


\begin{proof}
By Theorem \ref{shitcase}, any $123$-representable graph can be represented by a $123$-avoiding word with at most two copies of each letter. From Theorem \ref{circletwo}, this implies that any $123$-representable graph is a circle graph.
\end{proof}

Finally, we prove that not all circle graphs are $123$-representable. We begin with a lemma.

\begin{lemma} \label{opporder}
Let $x$ be a vertex in a graph $G = (V, E)$ with $xa, xb \in E$. If $ab \notin E$, and $a$ and $b$ appear on both sides of $x$ in some word $w$ representing $G$ with at most two copies of each letter, then $a$ and $b$ appear in opposite orders on both sides of $x$.
\end{lemma}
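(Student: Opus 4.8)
The plan is to exploit the structure of words with at most two copies of each letter together with the adjacency/alternation conditions. Since $a$ and $b$ each appear on both sides of $x$ and $x$ has at most two copies, $x$ must appear exactly twice; write $w = u_0\, x\, u_1\, x\, u_2$ where $u_1$ is the factor strictly between the two copies of $x$. Because $xa, xb \in E$, both $a$ and $b$ alternate with $x$, which forces exactly one copy of $a$ and exactly one copy of $b$ to lie in $u_1$ (and, since each appears on both sides of $x$, the other copy of $a$ lies in $u_0 \cup u_2$ but not in $u_1$, and likewise for $b$). So $a$ has one copy inside $u_1$ and one copy outside; same for $b$.

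Next I would argue about which ``outside'' region each second copy sits in. Suppose, for contradiction, that $a$ and $b$ appear in the \emph{same} order on both sides of $x$. The two copies of $a$ surround one copy of $x$ and the two copies of $b$ surround the other copy of $x$; combined with ``$a$ and $b$ appear on both sides of $x$,'' a short case check on where the non-$u_1$ copies of $a$ and $b$ land (both in $u_0$, both in $u_2$, or split) shows that under the ``same order'' hypothesis the four letters $a,a,b,b$ interleave in one of the alternating patterns $a \cdots b \cdots a \cdots b$ or $b \cdots a \cdots b \cdots a$ across $w$. But an alternating arrangement of the two $a$'s and two $b$'s is exactly the condition that $a$ and $b$ alternate in $w$, which would give $ab \in E$, contradicting $ab \notin E$. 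Hence the orders must be opposite.

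The cleanest way to organize this is probably to name the positions: let the copies of $x$ be $x_1, x_2$ with $x_1$ before $x_2$, let the unique copy of $a$ between them be $a'$ and the other copy $a''$, and similarly $b', b''$. Since $a$ appears on both sides of $x$, one of $a'', a'$ is before $x_1$ and one is after $x_2$ — but $a'$ is between $x_1$ and $x_2$, so in fact $a''$ is on the opposite side of the block $x_1 u_1 x_2$ from... wait, that's not forced; rather $a'$ is inside and $a''$ is outside, and ``both sides'' then means $a''$ together with $a'$ witness both sides only if... I would be careful here: ``$a$ appears on both sides of $x$'' with $x$ appearing twice should be read as: there is a copy of $a$ before some copy of $x$ and a copy of $a$ after some copy of $x$. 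Given $a'$ strictly between $x_1$ and $x_2$, that is automatic. So the real content is that $a$ alternates with $x$, forcing the count-and-position structure above; the honest statement to prove is then just the alternation-vs-order dichotomy for two doubled letters, which is the heart of the matter.

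\textbf{Main obstacle.} The delicate point is pinning down exactly what ``appear on both sides of $x$'' means and translating it into a statement about where the second copies of $a$ and $b$ lie — once that is fixed, the contradiction is the elementary observation that ``$a,b$ in the same order on both sides of $x$'' is equivalent to ``the two $a$'s and two $b$'s form an alternating subword,'' i.e. $ab \in E$. I expect the write-up to spend most of its effort on this bookkeeping (which copy is where relative to $x_1$ and $x_2$) and essentially no effort on any computation.
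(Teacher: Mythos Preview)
Your core idea is exactly the paper's: if $a$ and $b$ appear in the same order on both sides of $x$, then the subword on $\{a,b\}$ is $abab$ (or $baba$), so $a$ and $b$ alternate, contradicting $ab\notin E$. The paper's entire proof is that single sentence.

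However, your setup contains an outright error: the claim ``$x$ must appear exactly twice'' is false. In the lemma's principal application (the case analysis for $K_{1,6}$), $x$ appears \emph{once}, and ``$a$ appears on both sides of $x$'' simply means the two copies of $a$ straddle that single occurrence. All of your bookkeeping with $u_0,u_1,u_2$, the positions $a',a'',b',b''$, and the worry about which outside region the second copies land in is unnecessary and stems from this misreading. Once ``both sides of $x$'' is read relative to a fixed occurrence of $x$ --- so that each of $a,b$ contributes exactly one copy to the left and one to the right of that occurrence --- the argument collapses to the one-liner you yourself isolate at the end: same order on both sides $\Rightarrow$ the two $a$'s and two $b$'s interleave $\Rightarrow$ $ab\in E$, contradiction. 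No case split, no position-naming, and no discussion of the multiplicity of $x$ is needed.
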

\begin{proof}
Assume $a$ and $b$ appear in the same order (without loss of generality, $a$ followed by $b$) on each side of $x$. Then $a$ alternates with $b$, but since $ab \notin E$ this is impossible. Thus they must appear in opposite order.
\end{proof}

\begin{theorem}
The star $K_{1, 6}$ is not $123$-representable.
\end{theorem}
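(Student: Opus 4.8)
The plan is to suppose $K_{1,6}$ is $123$-representable and force a contradiction by determining the shape of a representing word closely enough to count. Denote the center by $x$ (of degree $6$) and the leaves by $\ell_1,\dots,\ell_6$; the leaves are pairwise non-adjacent. By Theorem~\ref{shitcase} there is a $123$-avoiding word $w$ representing $K_{1,6}$ in which every letter occurs at most twice. Since $d(x)=6\ge 2$, Theorem~\ref{easyprop} forces $x$ to occur at most twice in $w$, and it occurs at least once because it is not isolated. I will first show $x$ must in fact occur exactly twice, then extract a rigid block decomposition of $w$, and then bound the sizes of the blocks.

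Suppose $x$ occurred only once, say $w=A\,x\,B$. At most one leaf can occur exactly once, since two leaves each occurring once would vacuously alternate (Definition~\ref{alternateDef}) while being non-adjacent; hence at least five leaves occur twice, and each of them needs one copy in $A$ and one in $B$ in order to alternate with the single $x$. For any two such leaves $\ell,\ell'$, Lemma~\ref{opporder} says they occur in opposite orders in $A$ and in $B$. Restricting to five of them, their order in $A$ is a permutation $\sigma$ of five distinct symbols and their order in $B$ is $\sigma$ reversed; both are subsequences of the $123$-avoiding word $w$, so $\sigma$ avoids $123$ and its reversal avoids $123$, i.e.\ $\sigma$ avoids $123$ and $321$. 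But every sequence of five distinct values contains a monotone subsequence of length three (Erd\H{o}s--Szekeres), a contradiction. So $w=w_1\,x\,w_2\,x\,w_3$.

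Now I would pin down the structure. Each leaf alternates with the two occurrences of $x$ and has at most two copies, which forces exactly one copy of each leaf to lie in $w_2$, with the other copy (if it exists) lying in $w_1$ or in $w_3$; let $L$ and $R$ be the sets of leaves whose second copy is in $w_1$ and in $w_3$ respectively, and $M$ the set of leaves with a single copy. Thus $w_2$ is a permutation of all six leaves. Running the alternation criterion through the possible leaf-types and using that no two leaves are adjacent yields: $|M|\le 1$; in $w_2$ all of $L$ comes before all of $M$, which comes before all of $R$, forming contiguous blocks $u$, $m$, $v$; and, by Lemma~\ref{opporder} applied to the two occurrences of $x$, $w_1$ is the reverse of $u$ and $w_3$ is the reverse of $v$. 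Writing $z^{R}$ for the word $z$ read backwards, we get $w=u^{R}\,x\,u\,m\,v\,x\,v^{R}$, where $u$ is a permutation of $L$, $v$ a permutation of $R$, and $|m|\le 1$.

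It remains to count. The prefix $u^{R}\,x\,u$ of $w$ is $123$-avoiding. If $|L|\ge 2$, then the value of $x$ is either below every value in $L$ or above every value in $L$: otherwise some $a,b\in L$ with $a<x<b$ give the increasing triple formed by the copy of $a$ in $u^{R}$, then $x$, then the copy of $b$ in $u$. In the first case every ascent of $u$, preceded by $x$, is a $123$, so $u$ is decreasing; then $u^{R}$ is an increasing factor of $w$, so $|u|\le 2$. In the second case $u^{R}$ must be decreasing, so $u$ is an increasing factor of $w$, and again $|u|\le 2$. Hence $|L|\le 2$, and by the symmetric argument applied to the suffix $v\,x\,v^{R}$, $|R|\le 2$. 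Together with $|M|\le 1$ this gives $6=|L|+|M|+|R|\le 5$, a contradiction, so $K_{1,6}$ is not $123$-representable. I expect the structural step --- deducing that $w$ must have the form $u^{R}xumvxv^{R}$ --- to be the main obstacle: it requires carefully checking, for every combination of leaf-types and every placement of a leaf's second copy relative to the two $x$'s, whether the alternation condition would make two leaves adjacent, and it is this rigidity, rather than the closing count, that carries the proof.
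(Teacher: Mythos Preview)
Your proof is correct and takes a genuinely different route from the paper's. The paper also splits on whether $x$ occurs once or twice, but handles both cases more locally. When $x$ occurs once, it fixes three doubly-occurring leaves $a<b<c$, uses Lemma~\ref{opporder} to narrow the arrangement around $x$ to two candidates, and eliminates both by comparing the value of $x$ to $a$ and $c$ directly. When $x$ occurs twice, it derives no block structure at all: of the five doubly-occurring leaves, at least three have their second copy on the same side of the pair $x_1x_2$ by pigeonhole, and those three together with that single $x$ reproduce the configuration of the first case verbatim. Your version replaces the three-leaf case analysis by an Erd\H{o}s--Szekeres argument on five leaves, which is cleaner and explains conceptually why the bound lands where it does; and in the two-$x$ case you go further and pin down the full shape $u^{R}xumvxv^{R}$ of any representing word before bounding $|u|,|v|\le 2$. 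That decomposition is more informative---it essentially characterises every $123$-avoiding representant of a star---but it is also heavier than the paper's pigeonhole reduction, which reuses Case~1 with no extra work. Your hedging about the ``structural step'' being the main obstacle is unwarranted: each order constraint ($L$ before $M$ before $R$ in $w_2$, and $w_1=u^{R}$, $w_3=v^{R}$) follows from a one-line alternation check between two leaves of the relevant types, exactly as you sketch.
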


\begin{proof}
Consider the star $K_{1, 6}$ (Figure \ref{star}), and suppose $w$ is some $123$-representant for it. At most one of the vertices labeled $a$ through $f$ can appear once in the word, since if two appear only once then they alternate with each other, a contradiction. Thus, without loss of generality, $a, b, \dots, e$ appear twice in $w$. There are two cases.

\begin{thmcases}
\item{$x$ appears once. \newline It is sufficient to consider only $a$, $b$ and $c$. Without loss of generality, let $a < b < c$. Note that $b$ must appear after $c$ on the right of $x$, for otherwise $a_1b_2c_2$ forms a $123$-pattern. Furthermore, $a$ must come after $b$ on the left because otherwise $a_1b_1c_2$ would form a $123$-pattern. Now, by Lemma \ref{opporder} the only two possibilities for the order in which the letters appear in $w$ are $b_1a_1c_1xc_2a_2b_2$ and $b_1c_1a_1xa_2c_2b_2$.

We have $x<c$ for otherwise $a_1c_1x$ would be a $123$-pattern. Furthermore, $a < x$ for otherwise $xa_2b_2$ would form a $123$-pattern. However, if $a < x < c$, $a_1xc_2$ forms a $123$-pattern. Thus the star cannot be $123$-represented with only one instance of $x$.
} \label{caseone}
\item{$x$ appears twice. \newline We must have each $a, b, c, d, e$ appearing between $x_1$ and $x_2$. The second copies of $a, b, c, d, e$ can be either to the right of $x_2$ or to the left of $x_1$. By the Pigeon Hole Principle, there must be at least three letters to one side of the $x$'s. Without loss of generality, let $a$, $b$, and $c$ appear on both sides of $x_1$. Then we have reduced this argument to the same one examined in Case \ref{caseone}, so we are done.
}
\end{thmcases}

\begin{figure}[htbp]
\centering
\includegraphics[width=0.4\textwidth]{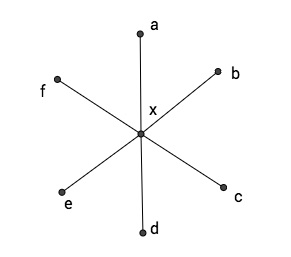}
\caption{\label{star}The star $K_{1, 6}$.}
\end{figure}

Therefore, the star $K_{1, 6}$ is not $123$-representable. 
\end{proof}

\begin{corollary}
Not all circle graphs are $123$-representable.
\end{corollary}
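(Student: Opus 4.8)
The plan is to exhibit a single circle graph that is not $123$-representable, and the natural candidate is already at hand: the star $K_{1,6}$, which the preceding theorem shows is not $123$-representable. So the only thing that remains is to verify that $K_{1,6}$ is in fact a circle graph.

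I would do this in one of two equivalent ways. The direct way is to produce the chord diagram explicitly: draw the chord corresponding to the center $x$, which cuts the disk into two arcs; then for each leaf $a_1,\dots,a_6$ draw a chord with one endpoint on each arc, with the six pairs of endpoints arranged in the same order along the two arcs, so that every leaf chord crosses $x$ but no two leaf chords cross each other. This diagram realizes exactly the edge set of $K_{1,6}$. The slicker way, avoiding pictures, is to invoke Theorem \ref{circletwo}: the word $w = a_6 a_5 \cdots a_1\, x\, a_1 a_2 \cdots a_6$ is a word-representant of $K_{1,6}$ in which every letter occurs at most twice. Indeed, $x$ alternates with each $a_i$ (the two copies of $a_i$ sit on opposite sides of the single $x$), while for $i \ne j$ the letters $a_i$ and $a_j$ occur in the pattern $a_j a_i a_i a_j$ and so do not alternate. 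Hence $w$ represents $K_{1,6}$ with at most two copies of each letter, and Theorem \ref{circletwo} gives that $K_{1,6}$ is a circle graph.

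Combining this with the theorem that $K_{1,6}$ is not $123$-representable yields the corollary immediately: the class of circle graphs strictly contains the class of $123$-representable graphs.

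As for obstacles: there are essentially none here, since all the content lies in the preceding theorem; this corollary is just the remark that the witness graph sits inside the class of circle graphs. The only minor care needed is the routine alternation check confirming that the proposed word (or chord diagram) represents $K_{1,6}$ and nothing else.
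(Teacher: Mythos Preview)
Your proposal is correct and follows essentially the same approach as the paper: both use $K_{1,6}$ as the witness, combining the preceding theorem with the observation that $K_{1,6}$ is a circle graph. The only cosmetic difference is that the paper justifies the latter in one line by noting that $K_{1,6}$ is a tree (and all trees are circle graphs), whereas you give an explicit word-representant via Theorem~\ref{circletwo}; both are fine.
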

\begin{proof}
In the previous theorem we showed that the $7$-star is not $123$-representable. However, since it is a tree, it is a circle graph. Thus not all circle graphs are $123$-representable.
\end{proof}

\subsection{Examples of $123$-representable graphs}
In this section we give three families of $123$-representble graphs.

\begin{theorem}
Complete graphs are $123$-representable.
\end{theorem}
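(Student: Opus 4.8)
The plan is to exhibit a single explicit $123$-avoiding word that represents $K_n$, with no relabeling needed. The natural candidate is the strictly decreasing word $w = n\,(n-1)\cdots 2\,1$ on the vertex set $\{1,2,\dots,n\}$, and everything should follow from inspecting this word against Definitions~\ref{alternateDef} and~\ref{avoidingDef}.

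First I would check that $w$ avoids $123$. Every subsequence of a strictly decreasing sequence is itself strictly decreasing, so $w$ contains no increasing subsequence of length three; by Definition~\ref{avoidingDef}, $w$ avoids $123$. Next I would check that $w$ represents $K_n$. Each letter occurs exactly once in $w$, so for any two letters $x$ and $y$ the alternation condition of Definition~\ref{alternateDef} holds vacuously: there is no pair of occurrences of $y$ (resp.\ of $x$) for which an intervening occurrence of $x$ (resp.\ of $y$) would be required. Hence every pair of letters alternates in $w$, so every pair $xy$ is an edge of the represented graph, which is therefore $K_n$. The cases $n \le 2$ are handled by the same argument (the words $1$ and $21$).

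There is essentially no obstacle here; the only thing to recall is the standard fact that any single permutation of the vertices word-represents the complete graph, and that among permutations the decreasing one is the (unique, up to nothing) extreme $123$-avoider. If desired, I would add the remark that for $n \ge 3$ this even gives a $123$-representant in which every letter appears exactly once, which is consistent with (and a special case of) Theorem~\ref{shitcase}.
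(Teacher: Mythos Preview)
Your proof is correct and is essentially identical to the paper's own argument: the paper also exhibits the decreasing word $n(n-1)\cdots 21$, notes it is $123$-avoiding, and observes that since each letter appears once every pair alternates, yielding $K_n$. Your write-up just spells out the verification against Definitions~\ref{alternateDef} and~\ref{avoidingDef} a bit more explicitly.
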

\begin{proof}
The complete graph on $n$ vertices can be represented by the word $n(n-1) \cdots 21$. Clearly this is $123$-avoiding and represents the complete graph since each letter appears once, thereby alternating with every other letter.
\end{proof}

\begin{theorem}\label{path123}
Paths are $123$-representable.
\end{theorem}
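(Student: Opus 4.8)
The plan is to exhibit an explicit $123$-avoiding word. Represent $P_n$ on the vertex set $\{1,2,\dots,n\}$ with edge set $\{\,i(i+1):1\le i\le n-1\,\}$. For $1\le k\le n-1$ let $B_k$ denote the length-two block $k\,(k{+}1)$, and set
\[
w_n \;=\; n\,B_{n-1}B_{n-2}\cdots B_1\,1 \;=\; n\,(n{-}1)\,n\,(n{-}2)\,(n{-}1)\,(n{-}3)\,(n{-}2)\cdots 2\,3\,1\,2\,1 .
\]
This word is $2$-uniform: for $2\le k\le n$ the letter $k$ occurs once as the right entry of $B_{k-1}$ and once more either as the left entry of $B_k$ (if $k\le n-1$) or as the leading letter (if $k=n$); the letter $1$ occurs once as the left entry of $B_1$ and once as the trailing letter. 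In every case the first occurrence of a letter precedes its second, and the blocks occur in the order $B_{n-1},B_{n-2},\dots,B_1$.

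First I would check that $w_n$ avoids $123$. The key point is that \emph{every ascent of $w_n$ is internal to a block}: if $p<q$ and $w_p<w_q$, then $p$ and $q$ lie in a common block $B_k$, so that $q=p+1$, $w_p=k$ and $w_q=k+1$. Indeed, for a given occurrence of a value $v$, the only letter to its right that exceeds $v$ is the $(v{+}1)$ immediately following it inside the same block, because every block further to the right has largest entry at most $v$ (and the leading $n$ is the global maximum, while the trailing $1$ has nothing to its right). Consequently $w_n$ has no increasing subsequence $p_1<p_2<p_3$: since $w_{p_2}<w_{p_3}$, the position $p_2$ admits a strictly larger letter to its right and must therefore be the left entry of a block; but $(p_1,p_2)$ is an ascent, so $p_1$ and $p_2$ lie in a common block with $p_2$ its right entry, and a right entry has no larger letter to its right — a contradiction.

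Next I would check that $w_n$ represents $P_n$. For an edge $k\,(k{+}1)$: scanning left to right we meet the first occurrence of $k+1$, then (strictly later, inside $B_k$) the first occurrence of $k$ immediately followed by the second occurrence of $k+1$, and then (strictly later still, inside $B_{k-1}$, or as the trailing letter if $k=1$) the second occurrence of $k$; the four occurrences therefore interleave, so $k$ and $k+1$ alternate. For a non-edge $ij$ with $j\ge i+2$: both occurrences of $j$ sit inside $B_{j-1}$ and $B_j$ — or are the leading $n$ and the right entry of $B_{n-1}$ when $j=n$ — and in either case both precede the whole block $B_i$, hence precede both occurrences of $i$; so no copy of $j$ lies between the two copies of $i$, and the pair does not alternate. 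This accounts for every edge and every non-edge, so $w_n$ represents $P_n$.

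The main obstacle is merely this last bit of bookkeeping, together with the slightly special positions of the letters $1$ and $n$ in the boundary cases $i=1$ and $j=n$; I expect no real difficulty. If one prefers to avoid even that, one can instead start from $s_n = 1\,2\,1\,3\,2\,4\,3\cdots n\,(n{-}1)$, which represents $P_n$ (it contains $123$, but that does not matter), and reverse it: reversal preserves the alternation relation of every pair of letters, so $s_n^{R} = (n{-}1)\,n\,(n{-}2)\,(n{-}1)\cdots 2\,3\,1\,2\,1$ — which is $w_n$ with its leading $n$ deleted — also represents $P_n$, and it avoids $123$ by exactly the ascent argument above.
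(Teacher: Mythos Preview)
Your proof is correct and uses exactly the same word as the paper, namely $w_n = n\,(n{-}1)\,n\,(n{-}2)\,(n{-}1)\cdots 2\,3\,1\,2\,1$; you simply supply the verification of $123$-avoidance and of the alternation structure that the paper leaves as ``clearly''. Your block decomposition and the observation that every ascent is internal to a block make the argument clean, and the alternative via reversing $s_n$ is a nice touch, though not needed.
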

\begin{proof}
A path on $n$ vertices can be represented by the $123$-avoiding word $n(n-1)n(n-2)(n-1)(n-3)(n-1) \ldots 23121$. This is clearly $123$-avoiding and represents a path since every vertex alternates only with the one before it and after it, with the exception of $1$ and $n$, which alternate only with $2$ and $n-1$, respectively.
\end{proof}

\begin{theorem}
Cycles are $123$-representable.
\end{theorem}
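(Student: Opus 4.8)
The plan is to produce, for each $n \geq 3$, an explicit $123$-avoiding word representing $C_n$ under a suitable labeling of its vertices, and then to verify directly that it works.

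Two observations constrain the construction. First, every vertex of $C_n$ has degree $2$, so by Theorem~\ref{easyprop} each letter occurs at most twice in any $123$-representant of $C_n$; moreover a letter occurring only once would alternate with every other letter and hence have degree $n-1$, impossible for $n\geq 4$. Thus for $n\geq 4$ the word must be $2$-uniform (for $n=3$ we simply use $321$, as for complete graphs). Second, a word is $123$-avoiding exactly when it is a shuffle of at most two weakly decreasing subsequences: this is Dilworth's theorem applied to the partial order ``earlier position and strictly smaller letter'', whose chains are the strictly increasing subsequences. So it suffices to build a shuffle of two weakly decreasing sequences over $\{1,\dots,n\}$, each letter used twice in all, whose alternation graph is $C_n$; then pattern-avoidance is automatic.

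I would realize $C_n$ as a Hamiltonian path on the ``middle'' labels together with two extra vertices that close it into a cycle. Concretely: start from the $123$-avoiding path word of Theorem~\ref{path123} --- two interleaved decreasing runs --- on an appropriate label set, and splice in the two remaining letters, one repeated inside the first run and one inside the second, at positions arranged so that each new letter alternates with exactly one endpoint of the path and with the other new letter, and with nothing else. One fixes the small cases first (for instance $C_4$ is represented by $34214312$, the $4$-cycle $1\text{-}3\text{-}2\text{-}4$) and reads off the general rule. The verification then splits into three kinds of vertex pairs --- the two new vertices; a new vertex and a path vertex; two path vertices --- and in each case whether the pair alternates is determined by whether a designated occurrence of one letter lies strictly between the two occurrences of the other, which is exactly the data the splicing is engineered to control. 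Avoidance of $123$ needs no separate argument, since the word is by construction a shuffle of two weakly decreasing sequences.

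The main obstacle is choosing the labeling and the splicing positions; this is genuinely delicate, because the ``obvious'' cyclic labeling $1\text{-}2\text{-}\cdots\text{-}n\text{-}1$ does not work. Closing the loop tends to force an increasing triple --- already for $C_4$, reading a standard chord diagram of the cycle with the vertices labeled cyclically gives a word that contains the pattern $123$ --- so the vertices must be relabeled, roughly so that the two vertices that appear twice within a single decreasing run receive the extreme labels and sit opposite the ``closing'' chord, keeping the decreasing runs from being pushed upward. Ensuring that the closing yields exactly the $n$-cycle --- with no spurious alternations among the middle vertices, and with neither new vertex over-connected --- is the heart of the argument; the $123$-avoidance itself is the easy part.
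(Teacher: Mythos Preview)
Your proposal is a plan, not a proof: you never actually write down the general word, and the one concrete instance you do give is wrong. In $34214312$ the letters $1$ and $3$ occur in the order $3,1,1,3$, so they do \emph{not} alternate; the word represents the path $3\text{--}2\text{--}4\text{--}1$, not the $4$-cycle $1\text{--}3\text{--}2\text{--}4$. Since the small case that was supposed to seed your ``read off the general rule'' step already fails, there is no construction here to verify.

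Your diagnosis of the difficulty is also off. You assert that the cyclic labeling $1\text{--}2\text{--}\cdots\text{--}n\text{--}1$ cannot work, but it does, and the paper uses exactly that labeling. The paper's argument is far simpler than your splicing scheme: take the path word of Theorem~\ref{path123} and delete its first letter (one copy of $n$) and its last letter (one copy of $1$). For $n=4$ this gives $342312$; for $n=5$ it gives $45342312$. Now $n$ and $1$ each occur once, so they alternate with each other, adding the edge $n\text{--}1$ that closes the cycle; and since the surviving copy of $n$ lies only between the two copies of $n-1$ (and the surviving copy of $1$ only between the two copies of $2$), no spurious edges are created. The word is still $123$-avoiding because deleting letters cannot introduce a pattern. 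Your Dilworth observation is correct and applies here too --- the resulting word is visibly a shuffle of two decreasing runs --- but the point is that no relabeling and no ``splicing in two extra vertices'' is needed.
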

\begin{proof}
A cycle on $n$ vertices can be represented by the $123$-avoiding word $(n-1)n(n-2)(n-1)(n-3)(n-1) \ldots 2312$. This is the same word as the one which represents a path, except with the first $n$ and last $1$ deleted to make it alternate with $1$ as well as with $n-1$.
\end{proof}

\section{Graphs represented by pattern avoiding $2$-uniform words}\label{2unifgraphs}

In this section we provide some properties of graphs which are representable by either $123$-avoiding or $132$-avoiding $2$-uniform words. As stated in Lemma \ref{2uniflemma}, the disjoint union of graphs which can be represented by $132$-avoiding $2$-uniform words is a $132$-representable graph as well. Here, we prove a more general result.

\begin{theorem}\label{onlyonenot2unif}
Let $G_1, G_2, \dots , G_k$ be connected $132(123)$-representable components of a graph $G$. Then $G$ is $132(123)$-representable if and only if at most one of the connected components cannot be $132(123)$-represented by a $2$-uniform word.
\end{theorem}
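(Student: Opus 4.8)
The plan is to prove both implications at once, with $\tau\in\{123,132\}$, using two elementary facts throughout: any subword of a $\tau$-avoiding word is $\tau$-avoiding, and deleting from a word every letter outside a fixed set $S$ does not change whether two letters of $S$ alternate. Write $G=G_1\sqcup\cdots\sqcup G_k$.

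For the ``if'' direction, suppose $G_2,\dots,G_k$ admit $2$-uniform $\tau$-avoiding representants $w_2,\dots,w_k$ while $G_1$ admits some $\tau$-avoiding representant $w_1$. I would relabel each $w_i$ by an order-preserving bijection so that the alphabets of $w_1,\dots,w_k$ become pairwise disjoint blocks of consecutive integers, with the letters of $w_1$ the largest, those of $w_2$ the next largest, and so on, and then set $w=w_1w_2\cdots w_k$. Two checks are needed. First, $w$ is $\tau$-avoiding: positions in $w$ increase from left to right while the label ranges of the blocks decrease, so in any occurrence of a length-$3$ pattern the leftmost of the three positions must carry the smallest value (true for both $123$ and $132$), which forces the two later positions into the same block as the first; hence the pattern lives inside one $w_i$, contradicting that $w_i$ avoids $\tau$. (This refines the concatenation argument behind Lemma~\ref{2uniflemma}.) Second, $w$ represents $G$: inside each block the alternating pairs are exactly the edges of $G_i$, and for $x$ in an earlier block and $y$ in a later block every copy of $x$ precedes every copy of $y$, so $x$ and $y$ can alternate only if each occurs exactly once --- but the only block that can contain a letter occurring once is $w_1$, since $w_2,\dots,w_k$ are $2$-uniform. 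So no cross-block pair alternates and $w$ represents $G_1\sqcup\cdots\sqcup G_k=G$.

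For the ``only if'' direction, suppose $G$ is $\tau$-representable. The first step is to pass to a $\tau$-avoiding representant $w$ of $G$ in which every letter occurs at most twice: for $\tau=123$ this is exactly Theorem~\ref{shitcase}, and for $\tau=132$ one uses the analogous statement that every $132$-representable graph has a $132$-avoiding representant with at most two copies of each letter (this is what underlies the proof in \cite{mainpaper} that such graphs are circle graphs, and can also be obtained by mirroring the proof of Theorem~\ref{shitcase}). The key observation is then that any two letters occurring exactly once in $w$ automatically alternate --- both clauses of Definition~\ref{alternateDef} are vacuous --- hence are adjacent in $G$; so all letters of $w$ that occur exactly once lie in a single connected component, say $G_1$ (if there are none, take $G_1$ arbitrarily). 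For every $j\ne 1$, each letter of $G_j$ occurs at least twice (it is not one of the ``once'' letters) and at most twice (by the first step), i.e.\ exactly twice, so the subword $w|_{G_j}$ of $w$ on the letters of $G_j$ is a $2$-uniform $\tau$-avoiding word representing $G_j$. Thus every connected component except possibly $G_1$ is $2$-uniformly $\tau$-representable, as required.

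The pattern bookkeeping in the ``if'' direction and the ``vacuous alternation'' observation are routine. The one genuine obstacle is securing the ``at most two copies'' reduction for $\tau=132$: the proof of Theorem~\ref{shitcase} rests on Lemma~\ref{switch}, which is special to $123$ (an ascent $ab$ may be flipped to $ba$ for free precisely because a descent cannot lie inside a $123$-pattern), so for $132$ one must redo the case analysis with a different free move --- flipping descents rather than ascents, and using Corollary~\ref{easycor} to bound the multiplicity of the degree-$1$ vertices that are the only possible source of trouble. Once that is in hand, the rest of the argument above goes through unchanged.
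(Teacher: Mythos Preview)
Your argument is correct and follows essentially the same route as the paper: reduce to a representant with at most two copies of each letter, observe that letters occurring once must all lie in one component (since singletons alternate vacuously), and for the converse concatenate the component words on disjoint value-blocks ordered so that a cross-block triple cannot form a $\tau$-pattern. Your ordering (largest block first, $w_1w_2\cdots w_k$) is the mirror of the paper's ($w_kw_{k-1}\cdots w_1$ with largest labels in $w_k$), which is only a cosmetic difference. You are in fact more careful than the paper on one point: the paper invokes Theorem~\ref{shitcase} (a $123$ statement) while ostensibly treating the $132$ case, whereas you correctly flag that for $\tau=132$ one needs the analogous ``at most two copies'' reduction from \cite{mainpaper} and that the $123$ proof via Lemma~\ref{switch} does not transfer verbatim.
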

\begin{proof}

First we show that $G$ is not $132$($123$)-representable if at least $2$ of the components are not $132$($123$)-representable by a $2$-uniform word. We first prove this for $132$-representable graphs.

Assume for the sake of contradiction that $G$ is $132$-representable and can be represented by some $w$. Let components $G_i$ and $G_j$ be graphs not representable by a $132$-avoiding $2$-uniform word. By Theorem \ref{shitcase} we can assume that $w$ has at most $2$ copies of each letter.

It is clear that $w$ must have subwords $w_i$ and $w_j$, formed by removing all letters from $w$ that do not appear in $G_i$ and $G_j$, respectively, which represent both $G_i$ and $G_j$ (otherwise it would be impossible for them to be components of $G$). It is impossible for either $w_i$ or $w_j$ not to be $132$-avoiding, thus $G_i$ and $G_j$ must both be $132$-representable. Now, if $w_i$ is not $2$-uniform, then some letter $a$ must appear only once (as no letter appears more than two times). Similarly, some $b$ must appear only once in $w_j$. However, this would mean that $a$ and $b$ alternate, and then $G_i$ and $G_j$ are not disconnected components of $G$. We have reached a contradiction, thus we are done.

Next we show that $G$ is $132$-representable if it has at most one component that is not representable by a $132$-avoiding $2$-uniform word.

First we relabel the components the following way: $G_1$ has vertices $1, 2, \dots ,t_1$; $G_2$ has vertices $t_1+1, t_1 + 2, \dots ,t_2$; $\dots$; $G_k$ has vertices $t_{k-1} + 1, \dots ,t_k$. Now let $w_1, w_2, \dots ,w_k$ be $132$-representants for $G_1, G_2, \dots, G_k$, respectively. Now we show that the word $w = w_kw_{k-1}\cdots w_1$ is a $132$-representant for $G$. It is easy to see that $w$ is $132$-representable, as no $w_i$ contains a $132$-pattern, and all letters appearing after $w_i$ are smaller than all letters appearing in $w_i$. Furthermore, $w$ represents $G$ since it is impossible for any letter appearing twice in $w$ to alternate with anything that is not in its component's word. Since at most one of the $w_i$ can have letters appearing fewer than two times, they will not alternate with any letters in any other $w_j$. Thus $G$ is $132$-representable.

\end{proof}
The proof for $123$-representable graphs is exactly the same.

Note that this gives us a new class of $123$- and $132$-representable graphs, namely those formed by the disjoint union of several graphs representable by a $2$-uniform word and one that possibly is not.

\subsection{Graphs represented by $123$-avoiding $2$-uniform words}
Here we prove that cycles and complete graphs can be $123$-represented by $2$-uniform words.

\begin{theorem}
Any path is $123$-representable by a $2$-uniform word.
\end{theorem}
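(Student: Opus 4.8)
The plan is to exhibit an explicit $2$-uniform $123$-avoiding word that represents the path $P_n$ on vertices $1, 2, \dots, n$, choosing a labeling that makes both the pattern-avoidance and the alternation conditions transparent. Recall that Theorem~\ref{path123} already gives a $123$-avoiding representant for paths, but it is not $2$-uniform (the letter $n-1$ appears many times), so the task is to redistribute the ``hub'' role among several letters. The natural idea is to label the path so that consecutive vertices are far apart in value in an alternating big/small pattern, i.e. use the labeling $1, 3, 5, \dots$ along one ``half'' and $2, 4, 6, \dots$ along the other, or more simply to build the word by a local rule: process the edges $\{i, i+1\}$ one at a time and insert the two copies of each new vertex so that it alternates only with its (at most two) neighbors.

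Concretely, I would try the following construction. Relabel $P_n$ so that its vertices in path-order are $v_1, v_2, \dots, v_n$ where the integer labels zig-zag, e.g.\ for even $n$ take the order to be $2, 4, 6, \dots, n, n-1, \dots, 3, 1$ (so that adjacent vertices on the path always consist of one ``high'' and one ``low'' label, and crucially never form an increasing triple with a third vertex). Then define $w$ by concatenating blocks, one per vertex, reading the path from one end to the other, where the block for $v_j$ places the first copy of $v_j$ immediately after the first copy of $v_{j-1}$ and the second copy of $v_j$ immediately after the second copy of $v_{j-1}$ — in effect $w$ looks like a ``descending staircase followed by its mirror.'' One then checks: (i) $w$ is $2$-uniform by construction; (ii) $v_j$ alternates with $v_{j-1}$ and $v_{j+1}$ and with no one else, because between the two copies of $v_j$ the only letters are copies of $v_{j-1}$ and $v_{j+1}$ (and each appears exactly once there); (iii) $w$ avoids $123$, which is where the zig-zag labeling is used: any three positions with increasing values would have to involve two letters that are out of alternation-order, contradicting the staircase shape.

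The main obstacle I expect is item (iii): verifying $123$-avoidance of the explicit word. A clean way to handle it is to prove a structural lemma first — that a $2$-uniform word built as ``a strictly decreasing-in-value staircase $x_1 x_2 \cdots$ then a second copy in a compatible order'' is automatically $123$-avoiding — and then simply observe that the chosen labeling makes our word of that form. Alternatively, one can induct on $n$: assuming $P_{n-1}$ has a $2$-uniform $123$-avoiding representant $w'$ with a prescribed endpoint letter visible at a controlled place, insert the two copies of the new leaf vertex adjacent to that endpoint, giving the new leaf the smallest available label so it can never be the ``$3$'' of a $123$-pattern and placing it so it can never be the ``$1$'' or ``$2$'' either; Lemma~\ref{switch} may help in massaging the insertion. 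I would carry out the base cases $n = 1, 2, 3$ by hand, then either finish by the induction or by directly validating the closed-form word, and conclude that every path is $123$-representable by a $2$-uniform word.
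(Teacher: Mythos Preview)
Your opening premise is wrong, and it sends the whole proposal off course. The word in Theorem~\ref{path123} \emph{is} already $2$-uniform; the displayed string there contains a typo (the third ``$(n-1)$'' should read ``$(n-2)$'', and the pattern continues in the obvious way). The intended word is
\[
n,\ n{-}1,\ n,\ n{-}2,\ n{-}1,\ n{-}3,\ n{-}2,\ \ldots,\ 2,\ 3,\ 1,\ 2,\ 1,
\]
in which every letter occurs exactly twice. The paper's proof of the present theorem is accordingly a single sentence: it simply invokes Theorem~\ref{path123}. There is no ``hub'' letter whose role needs to be redistributed, and your program of zig-zag relabelings, staircase blocks, and inductive leaf-insertions is aimed at a problem that does not exist.

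Independently of that misreading, what you have written is a plan rather than a proof. You never actually exhibit a word, and you yourself flag the $123$-avoidance verification as the main unresolved obstacle; neither the proposed ``structural lemma'' nor the induction is carried out, and the vague description of the block construction (``place the first copy of $v_j$ immediately after the first copy of $v_{j-1}$'') does not pin down a word, let alone one that avoids $123$. If you want a self-contained argument, the cleanest route is to write down the word above and observe that it splits into two non-increasing subsequences (the odd-indexed positions $n,n,n{-}1,\ldots,2$ and the even-indexed positions $n{-}1,n{-}2,\ldots,1,1$), so by pigeonhole it contains no strictly increasing subsequence of length three; the alternation check is then immediate, since between the two copies of any $k$ the only letters present are single copies of $k{-}1$ and $k{+}1$.
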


\begin{proof}
This follows directly from the construction given in Theorem \ref{path123}. 
\end{proof}

\begin{theorem}
Any complete graph is $123$-representable by a $2$-uniform word.
\end{theorem}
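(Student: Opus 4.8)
The plan is to exhibit an explicit word and verify the two required properties directly. Take
$$w = \big(n\,(n-1)\cdots 2\,1\big)\big(n\,(n-1)\cdots 2\,1\big),$$
that is, the strictly decreasing word on the alphabet $\{1,2,\dots,n\}$ written twice in a row. By construction $w$ is $2$-uniform, so it remains only to check that $w$ avoids $123$ and that $w$ represents $K_n$.

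For $123$-avoidance, I would observe that $w$ is the concatenation of two strictly decreasing blocks. Any strictly increasing subsequence of $w$ can contain at most one letter from each block, since two letters of an increasing subsequence lying in the same block would have to increase, contradicting that the block is strictly decreasing. Hence every strictly increasing subsequence of $w$ has length at most $2$, so $w$ contains no subsequence order-isomorphic to $123$, i.e., $w$ is $123$-avoiding.

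For representability, fix two distinct letters $i<j$. In each block $j$ precedes $i$ (larger values come first), so the four occurrences of $i$ and $j$ appear in $w$ in the order $j,\,i,\,j,\,i$, which is an alternation; thus $i$ and $j$ alternate in $w$. Since this holds for every pair of distinct letters, and every vertex occurs in $w$, the graph represented by $w$ is exactly $K_n$.

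I do not expect any genuine obstacle here: the construction is explicit and both verifications are short. The only point requiring a little care is the $123$-avoidance step, where one must rule out an increasing subsequence of length $3$ or more spanning the two blocks — this is precisely the observation that each of the two blocks is decreasing, so such a subsequence picks at most one letter from each block. (Alternatively, one could note that $\sigma\sigma$ represents $K_n$ for \emph{any} permutation $\sigma$ of the alphabet, and that taking $\sigma$ decreasing is what makes the doubled word pattern-avoiding; but the direct check above is the cleanest route.)
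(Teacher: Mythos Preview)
Your proof is correct and uses exactly the same word as the paper, namely the decreasing permutation repeated twice; the paper simply states this word without spelling out the verifications of $123$-avoidance and of the alternation condition, which you have supplied.
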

\begin{proof}
A complete graph on $n$ vertices can be represented by the $2$-uniform word $n(n-1)(n-2) \cdots 1n(n-1) \cdots 1$.
\end{proof}

\subsection{Graphs represented by $132$-avoiding $2$-uniform words}

\begin{theorem}
Any tree is $132$-representable by a $2$-uniform word.
\end{theorem}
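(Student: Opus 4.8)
\section*{Proof proposal}

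The plan is to induct on the number of vertices of the tree, removing a leaf, applying the inductive hypothesis to the smaller tree, and reinserting the leaf; the whole difficulty lies in formulating a strong enough hypothesis to keep the word $132$-avoiding.

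First, here is the reinsertion mechanism. Suppose $T = T' \cup \{\ell v\}$ with $\ell$ a new leaf attached to $v \in T'$, and let $w'$ be a $2$-uniform $132$-avoiding word representing $T'$. Pick an occurrence of $v$ in $w'$, write $w' = A\,v\,B$ for that occurrence, and set $w = A\,\ell\,v\,\ell\,B$, where $\ell$ is a brand new letter chosen larger than every letter of $w'$. Then $w$ is $2$-uniform, and it represents $T$: the only letter lying strictly between the two copies of $\ell$ is this single copy of $v$, so $\ell$ alternates with $v$ and with no other letter, while every alternation among the letters of $w'$ is untouched, so the edge set gains precisely $\ell v$. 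For the $132$-avoidance of $w$ I would invoke the recursive structure of $132$-avoiding words: a word avoids $132$ if and only if, when it is split at the occurrences of its largest letter, the resulting blocks have weakly decreasing ranges of values and each block itself avoids $132$. Since $\ell$ is the largest letter of $w$ and occurs exactly twice, with only the one copy of $v$ between them, the blocks of $w$ are $A$, the single letter $v$, and $B$; the first two already avoid $132$ (a single letter, and a factor of $w'$), $B$ avoids $132$ (again a factor of $w'$), and the value ranges decrease exactly when every letter of $A$ is $\ge v$ and every letter of $B$ is $\le v$ --- that is, exactly when the chosen occurrence of $v$ is a \emph{corner} of $w'$. (A symmetric statement holds with $\ell$ chosen smaller than all of $w'$ and the inequalities reversed.)

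So the task reduces to maintaining an inductive hypothesis that always supplies a word with the required corner. A first attempt is: for every tree $T$ and every vertex $v$ there is a $2$-uniform $132$-avoiding word for $T$ in which $v$ is the smallest label and occurs as the final letter --- then its last occurrence is automatically a corner, and a vertex occurring as the final letter can still carry edges (e.g.\ the star $K_{1,n}$ with centre $1$ and leaves $2,\dots,n+1$ is represented by the $132$-avoiding word $(n{+}1)\,n\cdots 2\,1\,2\,3\cdots (n{+}1)\,1$, whose centre is its smallest, final letter; single vertices are the trivial base case $v v$). For the inductive step one peels off a suitable leaf of $T$, applies the hypothesis to the smaller tree, reinserts by the mechanism above, and checks that the hypothesis is restored.

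The main obstacle I expect is precisely this last verification. The naive hypothesis above is not quite flexible enough --- following it blindly one can only reinsert a leaf at the distinguished vertex, which builds up paths and spiders rather than arbitrary trees, so the hypothesis must be enriched, for instance to keep track of several corner vertices at once (on both the small-value and the large-value side) and to dovetail with a rooted decomposition of $T$: given a root $r$ with child subtrees $T_1,\dots,T_d$, build $W(T)$ by concatenating ``first halves'' of the recursively constructed words $W(T_i)$, then $r$, then the complementary ``second halves'' in the reverse order, then $r$ again, labelling $r$ globally smallest. Verifying that such a concatenation is again $132$-avoiding comes back once more to the block criterion above, and making the two halves interleave so that exactly the edges $r c_i$ are created (and no edges between distinct $T_i$) is the delicate point on which the proof really turns; that is the step I would expect to demand the most care.
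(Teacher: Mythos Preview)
Your plan is heading toward a correct proof, but it is both incomplete and substantially more complicated than what the paper does. The paper's argument is one line: Gao--Kitaev--Zhang already construct, for any rooted tree with root labelled $1$, a $132$-avoiding representant
\[
w(T_r)\,w(T_{r-1})\cdots w(T_1)\;1\;n_1 n_2\cdots n_r
\]
in which the root occurs once and every other vertex occurs twice. Appending a second copy of $1$ at the very end makes the word $2$-uniform; since $1$ is the global minimum and sits in the last position it cannot complete a $132$ pattern, and the alternation between $1$ and each child $n_i$ becomes $n_i\,1\,n_i\,1$ while every other vertex still lies entirely to the left of both copies of $1$. That is the whole proof.

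Note that this construction realises exactly your strengthened hypothesis: rooting at any vertex $v$, one gets a $2$-uniform $132$-avoiding word with $v$ the smallest label and the final letter. So your instinct about what to prove is right; the gap is that you never actually establish it. Your leaf-insertion mechanism only lets you attach a new leaf at the distinguished corner vertex, as you yourself observe, so it does not propagate the hypothesis. Your fallback---the rooted decomposition with ``first halves'' and ``second halves''---is left undefined, and in fact the halving is unnecessary: in the cited construction each subtree word $w(T_i)$ appears \emph{intact} on the left (with its own root $n_i$ occurring once there), and the second copies of the $n_i$ are supplied by the increasing tail $n_1\cdots n_r$. Verifying $132$-avoidance then reduces to choosing the labels so that the subtrees $T_r,\dots,T_1$ occupy decreasing blocks of values with each $n_i$ minimal in its block, which is routine. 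If you want a self-contained argument, that is the recursion to write out; the ``halves'' formulation would need to be made precise before it could be checked, and as stated it is not a proof.
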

\begin{proof}
The recursive algorithm provided in \cite{mainpaper} gives a word which has one copy of the root vertex and two copies of every other vertex. The recursively generated word is $w(T_r)w(T_{r-1}) \cdots w(T_1)1n_1n_2 \cdots n_r$, where $1$ is the root vertex, $n_1, n_2, \dots , n_r$ are the children of the root vertex, and $w(T_m)$ denotes the word generated in the same way, but representing the subtree with $n_m$ as the root vertices. We claim that the word $w(T_r)w(T_{r-1}) \cdots w(T_1)1n_1 \linebreak n_2 \cdots n_r1$ is also $132$-avoiding and represents the same tree. It is easy to see that it is still $132$-avoiding since adding $1$ at the end cannot possibly form a $132$-pattern, Furthermore, $1$ is still alternating with all of $n_i$, and not with any other vertices. Thus any tree can be represented by a $2$-uniform $132$-avoiding word.
\end{proof}

\begin{theorem}\label{complete not 2-unif}
The complete graph $K_n$, with $n>3$, is not $132$-representable by a $2$-uniform word.
\end{theorem}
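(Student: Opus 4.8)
The plan is to first pin down the shape of \emph{every} $2$-uniform word that represents $K_n$, and then rule out the possibility that such a word avoids $132$ once $n>3$.

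\textbf{Step 1: any $2$-uniform representant of $K_n$ is a square.} Suppose $w$ is a $2$-uniform word over an $n$-letter alphabet representing $K_n$, and let a letter $x$ occur at positions $p<q$. Since $x$ must alternate with each of the other $n-1$ letters, exactly one occurrence of each of those letters lies strictly between positions $p$ and $q$; hence $q-p-1=n-1$, i.e.\ $q=p+n$. Thus the first occurrences of the letters fill positions $1,\dots,n$, the second occurrences fill $n+1,\dots,2n$, and $w_i=w_{i+n}$ for $1\le i\le n$. Writing $u=w_1\cdots w_n$, which is a permutation of the alphabet, we get $w=uu$.

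\textbf{Step 2: when does $uu$ avoid $132$?} An occurrence of $132$ in $uu$ at word-positions $i<j<k$ either has all three indices inside one copy of $u$ (so $u$ itself contains $132$), or it straddles the two copies, and I would split the straddling occurrences according to whether one or two of the indices fall in the first copy. If the split is $i\le n<j<k$, such an occurrence is present precisely when $u$ has an inversion $j'<k'$ with $u_{k'}\ge 2$ (take the index $i$ with $u_i=u_{k'}-1$, which exists since $u$ is a permutation of $[n]$). Avoiding all of these forces every inversion of $u$ to have smaller entry $1$, equivalently every value $\ge 2$ is a left-to-right maximum of $u$, so the values $2,3,\dots,n$ occupy positions in increasing order of value. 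If instead the split is $i<j\le n<k$, such an occurrence is present precisely when some ascent $u_i<u_j$ (with $i<j$) has a value of $u$ lying strictly between $u_i$ and $u_j$; avoiding all of these forces every ascent of $u$ to satisfy $u_j=u_i+1$.

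\textbf{Step 3: contradiction.} From the first condition the position of the value $2$ in $u$ precedes the position of the value $n$; calling these positions $i<j$ we have $u_i=2<n=u_j$ (and $2,n$ are distinct values, both $\ge 2$, because $n\ge 4>3$). From the second condition $u_j=u_i+1$, i.e.\ $n=3$, contradicting $n>3$. Hence no $2$-uniform $132$-avoiding word represents $K_n$. The only delicate point is Step 2: one must enumerate the straddling cases correctly and must not overlook that the value $1$ may sit anywhere in $u$ — which is exactly why the right conclusion from the inversion analysis is ``$2,3,\dots,n$ appear in increasing order'' rather than something stronger. Steps 1 and 3 are short, and we never need the converse direction that every such square $uu$ with $n\le 3$ actually works, though one checks $u=231$ handles $K_3$.
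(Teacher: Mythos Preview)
Your proof is correct, and it takes a genuinely different route from the paper's. The paper argues directly by case analysis on the letters $1,2,3,4$: writing $w=w_1 1 w_2 1 w_3$, it shows $2,3,4$ must appear increasingly in $w_2$, that $2$ and $3$ are forced into $w_1$ (in that order), and then that $4$ can sit neither in $w_3$ nor anywhere in $w_1$ without either creating a $132$ or destroying an alternation. Your argument instead first proves a structural lemma the paper never isolates---that any $2$-uniform representant of $K_n$ is necessarily a square $uu$ with $u$ a permutation---and then converts $132$-avoidance of $uu$ into two clean combinatorial constraints on $u$ (the values $2,\dots,n$ appear in increasing positional order, and every non-inversion pair has consecutive values), which are visibly incompatible once $n>3$. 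The paper's proof is shorter and entirely elementary; yours is slightly longer but more conceptual, explains transparently why $n=3$ is the boundary case, and the square observation in Step~1 is of independent interest. Both arguments are insensitive to relabeling, as they should be for a complete graph.
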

\begin{proof}
We prove this by contradiction. Assume there is a $132$-avoiding $2$-uniform word $w$ that represents a complete graph on at least $4$ vertices. Since $1$ must appear twice in $w$, we have $w = w_11w_21w_3$, with $2$, $3$, and $4$ appearing in $w_2$. It is easy to see that they must appear in increasing order, since otherwise there will be a $132$-pattern. Each of $2$, $3$, and $4$ must also appear either in $w_1$ or in $w_3$. Both $2$ and $3$ cannot appear in $w_3$ since otherwise they form a $132$-pattern with the $1$ and $4$ in $w_2$. Thus they appear in $w_1$ in increasing order because otherwise the $2$ and $3$ would not be alternating. Finally, the $4$ cannot appear in $w_3$ because then it would not be alternating with $2$ or with $3$. If the $4$ is in $w_1$ it must come before the $2$, since otherwise it would form a $132$-pattern with the $2$ in $w_1$ and the $3$ in $w_2$. However, this makes it impossible for the $4$ to alternate with $2$ and $3$, a contradiction. This completes the proof.
\end{proof}

\section{$132$-representable graphs}\label{132graphs}
Our final result in this paper answers a question posed in \cite{mainpaper}. 

\begin{theorem}
Not all circle graphs are $132$-representable.
\end{theorem}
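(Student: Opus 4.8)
The plan is to exhibit one specific circle graph and show it is not $132$-representable, reusing the machinery built in Section~\ref{2unifgraphs} rather than grinding through a single-word case analysis as in the $K_{1,6}$ argument. The candidate I would use is $G = K_4 \sqcup K_4$, the disjoint union of two copies of the complete graph on four vertices. The proof then splits into two halves: (a) $G$ is a circle graph; (b) $G$ is not $132$-representable.

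For (a): the complete graph $K_n$ is always a circle graph — put $2n$ points $p_1, \dots, p_{2n}$ on a circle in cyclic order and let the $i$-th chord join $p_i$ to $p_{i+n}$; any two such chords cross, so this realizes $K_n$. Moreover a disjoint union of circle graphs is again a circle graph: realize each summand's chord system inside its own arc of a common circle, so that chords belonging to different summands lie in disjoint arcs and hence do not cross. Applying both observations, $G = K_4 \sqcup K_4$ is a circle graph.

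For (b): first observe that $K_4$ is $132$-representable, since the strictly decreasing word $4321$ is $132$-avoiding and, having one copy of each letter, represents $K_4$; thus the hypotheses of Theorem~\ref{onlyonenot2unif} are satisfied by the two components of $G$. By Theorem~\ref{complete not 2-unif} (with $n = 4 > 3$), $K_4$ cannot be $132$-represented by a $2$-uniform word. Hence $G$ has two connected components neither of which is $132$-representable by a $2$-uniform word, and Theorem~\ref{onlyonenot2unif} then yields that $G$ is not $132$-representable. Together with (a) this proves the statement, and the corollary that $132$-representable graphs form a proper subclass of circle graphs is immediate.

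I do not expect a serious obstacle: the substantive work is already contained in Theorems~\ref{onlyonenot2unif} and \ref{complete not 2-unif}, and the argument above merely assembles them, so the only thing to verify carefully is that the hypotheses of Theorem~\ref{onlyonenot2unif} genuinely apply (each component is connected and $132$-representable, and at least two of them fail to be $2$-uniformly $132$-representable). If one instead wanted a \emph{connected} witness, the plan would change to searching for a connected circle graph rigid enough that, in any $132$-avoiding representant with at most two copies of each letter (which exists by the $132$-analogue of Theorem~\ref{shitcase}), one is forced either into an increasing triple giving a $132$-pattern or into an alternation/non-alternation conflict — a case analysis in the style of the $K_{1,6}$ proof — but for the theorem as stated the disjoint-union example already suffices.
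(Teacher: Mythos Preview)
Your proposal is correct and follows essentially the same approach as the paper: the paper's own proof also takes $K_4 \sqcup K_4$, invokes Theorem~\ref{complete not 2-unif} and Theorem~\ref{onlyonenot2unif} for non-$132$-representability, and notes (via a figure) that this graph is a circle graph. Your version is slightly more thorough in that you explicitly verify the hypothesis of Theorem~\ref{onlyonenot2unif} that each component is $132$-representable and give a self-contained argument for the circle-graph property rather than appealing to a picture.
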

\begin{proof}
As shown in Theorem \ref{complete not 2-unif}, $K_4$ is not representable by a $2$-uniform $132$-avoiding word. Then, by Theorem \ref{onlyonenot2unif}, the disjoint union of two complete graphs of size $4$ (Figure \ref{disjoint K4}) is not $132$-representable. However, Figure \ref{circlek4} demonstrates that it is a circle graph. Thus not all circle graphs are $132$-representable.
\end{proof}

\begin{figure}[htbp]
\centering
\includegraphics[width=0.4\textwidth]{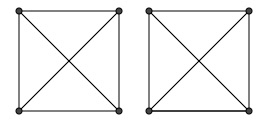}
\caption{\label{disjoint K4}The disjoint union of two complete graphs of size $4$.}
\end{figure}

\begin{figure}[htbp]
\centering
\includegraphics[width=0.4\textwidth]{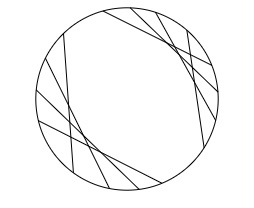}
\caption{\label{circlek4}The circle representation of the disjoint union of two complete graphs of size $4$, which demonstrates that this is indeed a circle graph.}
\end{figure}

One might wonder if all circle graphs are either $123$- or $132$-representable. This is not true, as can be seen in the simple counterexample in Figure \ref{counter}. It is not $123$-representable for the same reason that a star on $7$ vertices is not, and it is not $132$-representable for the same reason that the disjoint union of two complete graphs greater than $K_3$ is not $132$-representable.

\begin{figure}[htbp]
\centering
\includegraphics[width=0.6\textwidth]{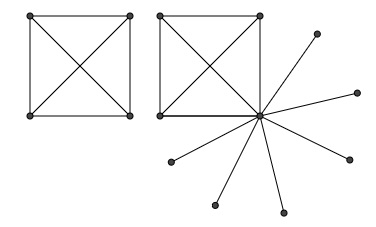}
\caption{\label{counter}An example of a circle graph which is neither $132$-representable nor $123$-representable.}
\end{figure}

\section{Open research directions}\label{opendir}

A natural next step in the study of pattern-representable graphs would be to investigate longer patterns, or to find more examples of $132$/$123$-representable and non-representable graphs.

In particular, the following questions may be of interest.

\begin{description}
\item[Question 1:]{Is the disjoint union of two complete graphs of size $4$ the smallest non-$132$-representable circle graph?}

\item[Question 2:]{Is the star on $7$ vertices the smallest non-$123$-representable circle graph?}

\end{description}

\section{Acknowledgments}

This research was conducted at the University of Minnesota Duluth REU program, supported by NSF grant 1358659 and NSA grant H98230-16-1-0026. I would like to thank Joe Gallian for the wonderful environment for research at UMD, for suggesting the problem, and for his constant encouragement and constructive critique of the manuscript. I would also like to thank Matthew Brennan and David Moulton for reading my paper and for greatly helpful discussions about circle graphs.

\bibliographystyle{plain}

\end{document}